\numberwithin{equation}{section}
\newcommand{\R}{\ensuremath{\mathbb{R}}}
\newcommand{\C}{\ensuremath{\mathbb{C}}}
\newcommand{\norm}[1]{\ensuremath{\|#1\|}}
\newtheorem{teo}{Theorem}[section]
\newtheorem{lem}[teo]{Lemma}
\newtheorem{cor}[teo]{Corollary}
\newtheorem{ej}[teo]{Example}
\newtheorem{obs}[teo]{Remark}
\def\today{{\number\day\space
 \ifcase\month\or
  January\or February\or March\or April\or May\or June\or
  July\or August\or September\or October\or November\or December\fi
 \space\number\year}}
\begin{document}

\author{Gabriel H. Tucci}
\title[Limits laws for geometric means of free random variables]{Limits laws for geometric means of free random variables}
\address{Bell Laboratories, 600 Mountain Avenue, Murray Hill, NJ 07974}
\email{gabriel.tucci@.alcatel-lucent.com}


\begin{abstract}
Let $\{T_{k}\}_{k=1}^{\infty}$ be a family of $*$--free identically distributed operators in a finite von Neumann algebra. In this work we prove a multiplicative version of the free central limit Theorem. More precisely, let $B_{n}=T_{1}^{*}T_{2}^{*}\ldots T_{n}^{*}T_{n}\ldots T_{2}T_{1}$ then $B_{n}$ is a positive operator and $B_{n}^{1/2n}$ converges in distribution to an operator $\Lambda$. We completely determine the probability distribution $\nu$ of $\Lambda$ from the distribution $\mu$ of $|T|^{2}$. This gives us a natural map $\mathcal{G}:\mathcal{M_{+}}\to \mathcal{M_{+}}$ with $\mu\mapsto \mathcal{G}(\mu)=\nu.$ We study how this map behaves with respect to additive and multiplicative free convolution. As an interesting consequence of our results, we illustrate the relation between the probability distribution $\nu$ and the distribution of the Lyapunov exponents for the sequence $\{T_{k}\}_{k=1}^{\infty}$ introduced in \cite{LyaV}.
\end{abstract}

\maketitle

\vspace{0.3cm}

\section{Introduction}

\vspace{0.2cm}
\noindent Denote by $\mathcal{M}$ the family of all compactly supported probability measures defined in the real line $\R$. We denote by $\mathcal{M}_{+}$ the set of all measures in $\mathcal{M}$ which are supported on $[0,\infty)$. On the set $\mathcal{M}$ there are defined two associative composition laws denoted by $*$ and $\boxplus$. The measure $\mu*\nu$ is the classical convolution of $\mu$ and $\nu$. In probabilistic terms, $\mu*\nu$ is the probability distribution of $X+Y$, where $X$ and $Y$ are commuting independent random variables with distributions $\mu$ and $\nu$, respectively. The measure $\mu\boxplus\nu$ is the free additive convolution of $\mu$ and $\nu$ introduced by Voiculescu \cite{Voi}. Thus, $\mu\boxplus\nu$ is the probability distribution of $X+Y$, where $X$ and $Y$ are free random variables with distribution $\mu$ and $\nu$, respectively. There is a free analogue of multiplicative convolution also. More precisely, if $\mu$ and $\nu$ are measures in $\mathcal{M}_{+}$ we can define $\mu\boxtimes\nu$ the multiplicative free convolution by the probability distribution of $X^{1/2}YX^{1/2}$, where $X$ and $Y$ are free random variables with distribution $\mu$ and $\nu$, respectively.

\vspace{0.3cm}
\noindent In this paper we prove a multiplicative version of the free central limit Theorem. More precisely, let $\{T_{k}\}_{k=1}^{\infty}$ be a family of $*$--free identically distributed operators in a finite von Neumann algebra. Let $B_{n}$ the positive operator defined as 
$$
B_{n}:=T_{1}^{*}T_{2}^{*}\ldots T_{n}^{*}T_{n}\ldots T_{2}T_{1}.
$$ 
Then  $B_{n}^{\frac{1}{2n}}$ converges in distribution to a positive operator $\Lambda$. We completely determine the probability distribution $\nu$ of $\Lambda$ from the probability distribution of $|T|^{2}$. Our first observation is that it is enough to restrict ourselves to positive operators. In other words, let $a_{k}=|T_{k}|$ then 
$$
B_{n}=T_{1}^{*}T_{2}^{*}\ldots T_{n}^{*}T_{n}\ldots T_{2}T_{1}
$$ 
has the same distribution as 
$$
b_{n}=a_{1}a_{2}\ldots a_{n}^{2}\ldots a_{2}a_{1}
$$ 
for all $n\geq 1$. Hence, to prove that $B_{n}^{\frac{1}{2n}}$ converges in distribution it is enough to prove that $b_{n}^{\frac{1}{2n}}$ converges in distribution. 

\vspace{0.3cm}
\noindent Our main result is the following, let $\mu$ be the probability distribution of $|T_{k}|^{2}$ then 
$$
B_{n}^{\frac{1}{2n}}\longrightarrow \Lambda\quad\text{in distribution}.
$$
Let $\nu$ be the probability distribution of $\Lambda$, then 
\begin{equation}\label{eqf}
\nu=\beta\delta_{0}+\sigma \quad\quad\text{with}\quad\quad d\sigma=f(t)\,\mathbf{1}_{\big(\norm{|T_{1}|^{-1}}_2^{-1},\norm{|T_{1}|}_2\big]}(t)\,dt
\end{equation}
where $\beta=\mu(\{0\})$, $f(t)=\big(F^{<-1>}_{\mu}\big)'(t)$ and $F_{\mu}(t)=S_{\mu}(t-1)^{-1/2}$ ($F^{<-1>}_{\mu}$ is the inverse with respect to composition of $F_{\mu}$). 

\vspace{0.3cm}
\noindent This gives us, naturally, a map 
$$
\mathcal{G}:\mathcal{M_{+}}\to \mathcal{M_{+}}\quad\text{with}\quad \mu\mapsto \mathcal{G}(\mu)=\nu.
$$ 
The measure $\mathcal{G}(\mu)$ is a compactly supported positive measure with at most one atom at zero and $\mathcal{G}(\mu)(\{0\})=\mu(\{0\})$. 

\vspace{0.3cm}
\noindent We would like to mention that Vladislav Kargin in Theorem 1 of \cite{Vla} proved an estimate in the norm of the positive operators $b_{n}$. More precisely, he proved that if $\tau(a_{1}^{2})=1$ there exists a positive constant $K>0$ such that
$$
\sqrt{n}\,\sigma(a_{1}^{2})\leq \norm{b_{n}}\leq K\,n\,\norm{a_{1}^{2}}
$$
where $\sigma^{2}(x)=\tau(x^{2})-\tau(x)^{2}$.

\vspace{0.3cm}
\noindent  It is interesting to compare this result with the analogous result in the classical case. Let $\{a_{k}\}_{k=1}^{\infty}$ be independent positive identically distributed commutative random variables with distribution $\mu$. Applying the Law of the Large Numbers to the random variables $\log(a_{k})$, in case $\log(a_{k})$ is integrable, or applying Theorem 5.4 in \cite{DykHa} in the general case, we obtain that
$$
\Big(a_{1}a_{2}\ldots a_{n}\Big)^{1/n}\longrightarrow \,e^{\,\tau\,(\,\log(\,a_{1})\,)}\in [0,\infty)
$$
where the convergence is pointwise.

\vspace{0.3cm}
\noindent  The Lyapunov exponents of a sequence of random matrices was investigated in the pioneering paper of Furstenberg and Kesten \cite{FusKes} and by Oseledec in \cite{Os}. Ruelle \cite{Ru} developed the theory of Lyapunov exponents for random compact linear operators acting on a Hilbert space. Newman in \cite{N1} and \cite{N2} and later Isopi and Newman in \cite{IsN} studied Lyapunov exponents for random $N\times N$ matrices as $N\to\infty$. Later on, Vladislav Kargin \cite{LyaV} investigated how the concept of Lyapunov exponents can be extended to free linear operators (see \cite{LyaV} for a more detailed exposition).

\vspace{0.3cm}
\noindent  In our case, given $\{a_{k}\}_{k=1}^{\infty}$ be free positive identically distributed random variables. Let $\mu$ be the spectral probability distribution of $a_{k}^2$ and assume that $\mu(\{0\})=0$. Then 
$$
\Big(a_{1}a_{2}\ldots a_{n}^{2}\ldots a_{2}a_{1}\Big)^{\frac{1}{2n}}\longrightarrow \Lambda
$$ 
where $\Lambda$ is a positive operator. The probability distribution of the Lyapunov exponents associated to the sequence $\{a_{k}\}_{k=1}^{\infty}$, is the spectral probability distribution $\gamma$ of the selfadjoint operator $L:=\ln(\Lambda)$. Moreover, $\gamma$ is absolutely continuous with respect to Lebesgue measure and has Radon--Nikodym derivative given by
$$
d\gamma(t)=e^{t}f(e^{t})\,\mathbf{1}_{\big(\ln\norm{a_{1}^{-1}}_{2}^{-1},\,\ln\norm{a_1}_2\big]}(t)\,dt
$$
where the function $f(t)$ is as in equation (\ref{eqf}).

\vspace{0.3cm}
\noindent Now we will describe the content of this paper. In Section $\S2$, we recall some preliminaries as well as some known results and fix the notation. In Section $\S3$, we prove our main Theorem and study how the map $\mathcal{G}$ behaves with respect to additive and multiplicative free convolution.  In Section $\S4$, we present some examples. Finally, in Section $\S5$, we derive the probability distribution of the Lyapunov exponents of the sequence $\{a_{k}\}_{k=1}^{\infty}$.

\vspace{0.4cm}
\noindent{\it Acknowledgment:} I thank my advisor, Ken Dykema, for many helpful discussions and comments.

\section{Preliminaries and Notation}
\vspace{0.2cm}
\noindent We begin with an analytic method for the calculation of multiplicative free convolution discovered by Voiculescu. Denote $\C$ the complex plane and set $\C^{+}=\{z\in\C\,\,:\,\,\mathrm{Im}(z)>0\}$, $\C^{-}=-\C^{+}$. For a measure $\nu\in\mathcal{M}_{+}\setminus\{\delta_{0}\}$ one defines the analytic function $\psi_{\nu}$ by 
$$
\psi_{\nu}(z)=\int_{0}^{\infty}{\frac{zt}{1-zt}\,d\nu(t)}
$$
for $z\in\C\setminus{[0,\infty)}$. The measure $\nu$ is completely determined by $\psi_{\nu}$. The function $\psi_{\nu}$ is univalent in the half-plane $i\C^{+}$, and $\psi_{\nu}(i\C^{+})$ is a region contained in the circle with center at $-1/2$ and radius $1/2$. Moreover, $\psi_{\nu}(i\C^{+})\cap (-\infty,0]=(\beta-1,0)$, where $\beta=\nu(\{0\})$. If we set $\Omega_{\nu}=\psi_{\nu}(i\C^{+})$, the function $\psi_{\nu}$ has an inverse with respect to composition
$$
\chi_{\nu}:\Omega_{\nu}\to i\C^{+}.
$$
\noindent Finally, define the $S$--transform of $\nu$ to be 
$$
S_{\nu}(z)=\frac{1+z}{z}\chi_{\nu}(z)\,\,,\quad\quad z\in\Omega_{\nu}.
$$
See \cite{BerPata} for a more detailed exposition. The following is a classical Theorem originally proved by Voiculescu and generalized by Bercovici and Voiculescu in \cite{BerVoi} for measures with unbounded support.

\vspace{0.3cm}
\begin{teo}
Let $\mu,\nu\in\mathcal{M}_{+}$. Then 
$$
S_{\mu\boxtimes\nu}(z)=S_{\mu}(z)S_{\nu}(z)
$$
\noindent for every $z$ in the connected component of the common domain of $S_{\mu}$ and $S_{\nu}$.  
\end{teo}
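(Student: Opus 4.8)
The plan is to reduce the statement to a single moment identity and then extend it to the full domain by analytic continuation. First I would fix a finite von Neumann algebra $(\mathcal{A},\tau)$ containing two free positive elements $a,b$ with spectral distributions $\mu$ and $\nu$; by definition $\mu\boxtimes\nu$ is the distribution of $a^{1/2}ba^{1/2}$, and since $\tau\big((a^{1/2}ba^{1/2})^n\big)=\tau\big((ab)^n\big)$ for all $n$, the measure $\mu\boxtimes\nu$ has the same function $\psi$, hence the same $S$-transform, as the (non-self-adjoint) product $ab$. So it suffices to understand the moments $m_n:=\tau\big((ab)^n\big)$ and to show that the series $\psi_{\mu\boxtimes\nu}(z)=\sum_{n\ge 1}m_n z^n$ is related to $\psi_\mu$ and $\psi_\nu$ in precisely the way that makes $z\mapsto\frac{1+z}{z}\chi(z)$ multiplicative.

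The heart of the argument is this core identity, and I would obtain it by one of the two classical routes. The combinatorial route: expand $\tau\big((ab)^n\big)$ using freeness of $a$ and $b$ and the moment--cumulant formula over non-crossing partitions, then use Kreweras complementation to factor the resulting sum into a piece depending only on the free cumulants of $a$ and a piece depending only on those of $b$; passing to generating functions converts this factorization into the functional relation $\chi_{\mu\boxtimes\nu}(z)=\frac{z}{1+z}\,S_\mu(z)S_\nu(z)$ on a one-sided neighbourhood of the relevant boundary arc. The operator-model route (Voiculescu's original one): realize $a$ and $b$ in a "canonical" form on a free product Hilbert space --- as operators built from free creation/annihilation operators together with suitable diagonal pieces (schematically of the type $(1+\ell_1)^{*}d_1(1+\ell_1)$) --- for which the $S$-transform is explicit, then compute the distribution of the product from the combined model and recognize $S_\mu S_\nu$ directly. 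Either way the output is the equality $S_{\mu\boxtimes\nu}=S_\mu S_\nu$ as analytic functions on some nonempty open set.

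It then remains to upgrade "on some open set" to "on the connected component of the common domain of $S_\mu$ and $S_\nu$." By the discussion of $\psi_\nu,\chi_\nu$ recalled above, $S_\mu$ is analytic on $\Omega_\mu$ and $S_\nu$ on $\Omega_\nu$, and $S_{\mu\boxtimes\nu}$ extends analytically to the relevant region; all three are therefore analytic on that connected component. Since they agree on a nonempty open subset of it (the identity among convergent power series from the previous paragraph), the identity theorem for analytic functions forces agreement on the whole component. For measures with compact support, as here, everything is genuinely a convergent power series argument; the passage to unbounded support is exactly the content of \cite{BerVoi} and would simply be quoted.

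I expect the main obstacle to be the core identity of the second paragraph. Whether one follows the non-crossing-partition bookkeeping (carefully tracking inner versus outer blocks and matching them through Kreweras complements) or the Hilbert-space construction (checking that the canonical form has the claimed $S$-transform and that the product of two such operators remains tractable), this is where the real content lies; the reduction to $ab$ and the final analytic continuation are routine by comparison.
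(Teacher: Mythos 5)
The paper does not prove this theorem at all: it is stated as a classical result of Voiculescu, with the extension to measures of unbounded support credited to Bercovici and Voiculescu \cite{BerVoi}, and is then used as a black box in the rest of the article. So there is no argument in the paper to compare yours against step by step; the honest comparison is between your sketch and the standard proofs in the literature you yourself invoke.

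Judged on its own terms, your outline is structurally sound: the reduction of $\mu\boxtimes\nu$ to the moments of $ab$ via traciality, $\tau\big((a^{1/2}ba^{1/2})^n\big)=\tau\big((ab)^n\big)$, is correct, and the final step --- establishing the identity on a nonempty open set and propagating it to the connected component of the common domain by the identity theorem, quoting \cite{BerVoi} for unbounded support --- is exactly how the full statement is obtained. The problem is that the entire mathematical content of the theorem is the core identity $\chi_{\mu\boxtimes\nu}(z)=\tfrac{z}{1+z}\,S_\mu(z)S_\nu(z)$, and in your write-up this is not proved: it is delegated to ``one of the two classical routes'' (the Kreweras-complement factorization over non-crossing partitions, or Voiculescu's canonical operator model), each described in a sentence, with the acknowledgment that ``this is where the real content lies.'' Correctly naming the two standard arguments and locating the difficulty is not the same as carrying one of them out, so as a standalone proof the proposal has a genuine gap precisely at its central step. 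That said, it is the same gap the paper leaves open by citing the literature, which is a legitimate way to handle a classical theorem --- but then the proposal should be framed as a citation plus reduction, not as a proof.
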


\vspace{0.3cm}
\noindent  It was shown by Hari Bercovici and Dan Voiculescu in \cite{Ber} that the additive free convolution of probability measures on
the real line tend to have a lot fewer atoms. More precisely we have the following Theorem.
\vspace{0.2cm}
\begin{teo}\label{Berco}
Let $\mu$ and $\nu$ be two probability measures supported in $\R$. The number $a$ is an atom
for the free additive convolution of $\mu$ and $\nu$ if and only if a can be written as
$a = b + c$ where $\mu(\{b\})+\nu(\{c\})>1$. In this case, $\mu\boxplus\nu\,(\{a\})=\mu(\{b\})+\nu(\{c\})-1$.
\end{teo}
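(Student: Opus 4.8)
The plan is to use the analytic subordination functions for free additive convolution (Voiculescu, Biane). Writing $\rho:=\mu\boxplus\nu$ and $G_\sigma(z)=\int (z-t)^{-1}\,d\sigma(t)$ for the Cauchy transform, recall that there exist analytic self-maps $\omega_1,\omega_2$ of $\C^{+}$ with $G_\rho(z)=G_\mu(\omega_1(z))=G_\nu(\omega_2(z))$ and $\omega_1(z)+\omega_2(z)=z+G_\rho(z)^{-1}$. The detection tool for atoms is the classical identity $\sigma(\{a\})=\lim_{y\downarrow 0} iy\,G_\sigma(a+iy)$, valid for every probability measure $\sigma$ and every $a\in\R$ (the limit vanishing precisely when $a$ is not an atom); thus the whole statement reduces to evaluating $\lim_{y\downarrow 0} iy\,G_\rho(a+iy)$ in terms of the atoms of $\mu$ and $\nu$.

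For the ``only if'' direction I would argue as follows. Assume $a$ is an atom of $\rho$, say $\rho(\{a\})=m>0$, so that $|G_\rho(a+iy)|\sim m/y\to\infty$. Since $|G_\mu(w)|\le(\mathrm{Im}\,w)^{-1}$ and likewise for $G_\nu$, the subordination identities force $\mathrm{Im}\,\omega_1(a+iy)\to 0$ and $\mathrm{Im}\,\omega_2(a+iy)\to 0$. Using the boundary theory of Pick functions (Lindel\"{o}f's theorem and the Julia--Carath\'{e}odory theorem applied to $\omega_1$ and $\omega_2$) one shows that $\omega_1(a+iy)\to b$ and $\omega_2(a+iy)\to c$ for finite real $b,c$, that $b$ and $c$ are necessarily atoms of $\mu$ and $\nu$ (otherwise $G_\mu\circ\omega_1$ and $G_\nu\circ\omega_2$ could not blow up at rate $m/y$), and that
$$
\omega_1(a+iy)-b=\frac{\mu(\{b\})}{G_\rho(a+iy)}\,(1+o(1)),\qquad \omega_2(a+iy)-c=\frac{\nu(\{c\})}{G_\rho(a+iy)}\,(1+o(1)).
$$
Adding these and comparing with the identity $\bigl(\omega_1(a+iy)-b\bigr)+\bigl(\omega_2(a+iy)-c\bigr)=(a-b-c)+iy+G_\rho(a+iy)^{-1}$ first forces $a=b+c$ (the constant term must vanish as $y\downarrow 0$) and then, multiplying through by $G_\rho(a+iy)$ and invoking $iy\,G_\rho(a+iy)\to m$, gives $\mu(\{b\})+\nu(\{c\})=m+1$. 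In particular $\mu(\{b\})+\nu(\{c\})>1$ and $\rho(\{a\})=\mu(\{b\})+\nu(\{c\})-1$.

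For the ``if'' direction I would pass to the operator picture. Realize $\mu$ and $\nu$ as the spectral distributions of free selfadjoint elements $X,Y$ in a tracial von Neumann algebra $(\mathcal{A},\tau)$, so that $X+Y$ has distribution $\rho$. If $a=b+c$ with $\mu(\{b\})+\nu(\{c\})>1$, set $p=\mathbf{1}_{\{b\}}(X)$ and $q=\mathbf{1}_{\{c\}}(Y)$; these are free projections with $\tau(p)=\mu(\{b\})$ and $\tau(q)=\nu(\{c\})$, so by the standard formula $\tau(p\wedge q)=\max\bigl(0,\tau(p)+\tau(q)-1\bigr)=\mu(\{b\})+\nu(\{c\})-1>0$. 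On the range of $e:=p\wedge q$ one has $Xe=be$ and $Ye=ce$, hence $(X+Y)e=ae$, whence $\rho(\{a\})\ge\tau(e)>0$; in particular $a$ is an atom. Applying the ``only if'' direction to this atom, and observing that at most one decomposition $a=b+c$ can satisfy $\mu(\{b\})+\nu(\{c\})>1$ (two such would force a total mass exceeding $2$), yields the exact value $\rho(\{a\})=\mu(\{b\})+\nu(\{c\})-1$.

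The formal computation is short; the main obstacle is the boundary regularity of the subordination functions $\omega_1,\omega_2$ at the real point $a$ --- establishing that they admit finite radial (indeed nontangential) limits there, that these limits are genuine atoms of $\mu$ and $\nu$, and that the first-order expansions above hold with a controlled error term. This is precisely where the fine theory of Pick functions (angular derivatives, Julia--Carath\'{e}odory, Lindel\"{o}f) enters, and it is the technical core of the Bercovici--Voiculescu argument; the compact-support hypothesis available in the present setting lightens, but does not remove, these estimates.
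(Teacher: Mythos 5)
A preliminary remark: the paper itself gives no proof of Theorem \ref{Berco} --- it is quoted from Bercovici--Voiculescu \cite{Ber} and used as a black box --- so the only benchmark is the original regularity argument, and your outline does follow essentially that route: analytic subordination plus boundary behaviour of Pick functions for the hard implication, and a projection/eigenvector argument for the easy one. Your ``if'' direction is in fact complete and correct: you only need the inequality $\tau(p\wedge q)\ge\tau(p)+\tau(q)-1$, which holds in every finite von Neumann algebra regardless of freeness, the observation $Xe=be$, $Ye=ce$ on $e=p\wedge q$ gives $e\le\mathbf{1}_{\{a\}}(X+Y)$, and your uniqueness argument for the decomposition $a=b+c$ (two distinct decompositions would force total mass exceeding $2$) is right.

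The genuine gap is in the ``only if'' half, which as written is a plan rather than a proof. The three claims you defer --- that $\omega_1(a+iy),\omega_2(a+iy)$ have finite real limits $b,c$ and approach them in a controlled (essentially nontangential) manner, that $b$ and $c$ are genuine atoms of $\mu$ and $\nu$, and that $\omega_1(a+iy)-b=\mu(\{b\})\,G_\rho(a+iy)^{-1}(1+o(1))$ --- are precisely the analytic content of the theorem; invoking Julia--Carath\'{e}odory and Lindel\"{o}f by name does not discharge them. Note, for instance, that $\mathrm{Im}\,\omega_1(a+iy)\to 0$ alone does not produce a limit point, and blow-up of $|G_\mu(\omega_1(a+iy))|$ at rate $1/y$ does not by itself make that limit point an atom; one needs quantitative control of the rate of approach. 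The standard way to make your formal computation rigorous is to rephrase everything in terms of $F=1/G$ and Julia--Carath\'{e}odory derivatives: $a$ is an atom of $\rho$ iff $F_\rho$ has nontangential value $0$ at $a$ with finite JC derivative, and then $\rho(\{a\})=1/F_\rho'(a)$; the subordination relations $F_\rho=F_\mu\circ\omega_1=F_\nu\circ\omega_2$, the JC chain rule, and differentiation of $\omega_1(z)+\omega_2(z)=z+F_\rho(z)$ give $\mu(\{b\})+\nu(\{c\})=\bigl(\omega_1'(a)+\omega_2'(a)\bigr)/F_\rho'(a)=\bigl(1+F_\rho'(a)\bigr)/F_\rho'(a)=1+\rho(\{a\})$, which is exactly your pair of expansions packaged correctly. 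Until that bookkeeping (JC behaviour of the $\omega_j$ at $a$, identification of $\mu(\{b\})$ with $1/F_\mu'(b)$, and the chain rule) is actually carried out, the hard direction remains unproved in your write-up, even though the strategy itself is the right one.
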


\vspace{0.3cm}
\noindent  For measures supported on the positive half-line, an analogous result holds, with
a difference when zero is an atom. The following Theorem was proved by Serban Belinschi in \cite{B}.
\vspace{0.2cm}
\begin{teo}\label{SB}
 Let $\mu$ and  $\nu$ be two probability measures supported in $[0,\infty)$. 
\begin{enumerate}
 \item The following are equivalent

\vspace{0.2cm}

\begin{enumerate}

\item $\mu\boxtimes \nu$ has an atom at $a>0$

\vspace{0.2cm} 	

\item there exists $u$ and $v$ so that $uv=a$ and $\mu(\{u\})+\nu(\{v\})>1$.\\ Moreover, $\mu(\{u\})+\nu(\{v\})-1=\mu\boxtimes \nu\,(\{a\})$.

\end{enumerate}

\vspace{0.2cm} 
\item $\mu\boxtimes \nu\,(\{0\})=\max\{\mu(\{0\}),\nu(\{0\})\}$.
\end{enumerate}

\end{teo}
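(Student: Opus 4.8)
The plan is to follow the subordination approach. First I would realize $\mu\boxtimes\nu$ as the spectral distribution, relative to a faithful trace $\tau$ on a finite von Neumann algebra $M$, of $x^{1/2}yx^{1/2}$, where $x,y\in M$ are free positive elements with distributions $\mu$ and $\nu$ (the case $\mu=\delta_0$ or $\nu=\delta_0$ being trivial); by traciality this has the same distribution as $y^{1/2}xy^{1/2}$, a symmetry I would use freely. Two analytic inputs would do the work. The first is an atom-detection criterion: for $\rho\in\mathcal M_+$ and $t_0>0$ one has $\rho(\{t_0\})=\lim_{z\to 1/t_0,\ \mathrm{Im}\,z>0}(1-zt_0)\bigl(1+\psi_\rho(z)\bigr)$, immediate from $1+\psi_\rho(z)=\int_0^\infty(1-zt)^{-1}\,d\rho(t)$ and dominated convergence; equivalently, writing $\eta_\rho:=\psi_\rho/(1+\psi_\rho)$, the measure $\rho$ has an atom at $t_0$ iff $\eta_\rho$ has angular limit $1$ at $1/t_0$ with finite angular (Julia--Carath\'eodory) derivative $d(\eta_\rho)(1/t_0)$ there, in which case $d(\eta_\rho)(1/t_0)=t_0/\rho(\{t_0\})$. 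The second is the multiplicative subordination theorem (Biane; Belinschi--Bercovici): there exist analytic functions $\omega_1,\omega_2:\C\setminus[0,\infty)\to\C\setminus[0,\infty)$ with $\psi_{\mu\boxtimes\nu}=\psi_\mu\circ\omega_1=\psi_\nu\circ\omega_2$ and $\omega_1(z)\,\omega_2(z)=z\,\eta_{\mu\boxtimes\nu}(z)$. I would obtain the product relation on $i\C^+$ from $\omega_i=\chi_{\mu_i}\circ\psi_{\mu\boxtimes\nu}$, the identity $\chi_\rho(w)=\tfrac{w}{1+w}S_\rho(w)$, and the multiplicativity $S_{\mu\boxtimes\nu}=S_\mu S_\nu$ (\S2), the substantive input being that $\omega_1,\omega_2$ extend analytically to all of $\C\setminus[0,\infty)$.

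For the implication (b)$\Rightarrow$(a), together with the lower bound $(\mu\boxtimes\nu)(\{a\})\ge\mu(\{u\})+\nu(\{v\})-1$, I would argue with projections. Given $uv=a$, put $p=\mathbf{1}_{\{u\}}(x)$ and $q=\mathbf{1}_{\{v\}}(y)$; these are free (functions of free elements), and every vector in the range of $p\wedge q$ is simultaneously a $u$-eigenvector of $x$ and a $v$-eigenvector of $y$, hence an $a$-eigenvector of $x^{1/2}yx^{1/2}$, so $\mathbf{1}_{\{a\}}(x^{1/2}yx^{1/2})\ge p\wedge q$. The classical identity $\tau(p\wedge q)=\max\{0,\tau(p)+\tau(q)-1\}$ for free projections then gives the bound, and a genuine atom once $\mu(\{u\})+\nu(\{v\})>1$. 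I would also record that at most one pair $(u,v)$ with $uv=a$ can satisfy $\mu(\{u\})+\nu(\{v\})>1$, since two distinct such pairs would force $\mu$ or $\nu$ to have total mass exceeding $1$; hence no multiplicity is lost and this single contribution is the whole atom.

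For (a)$\Rightarrow$(b) and the exact mass I would use subordination. Suppose $m:=(\mu\boxtimes\nu)(\{a\})>0$; by the detection criterion $\eta_{\mu\boxtimes\nu}=\eta_\mu\circ\omega_1$ has angular limit $1$ at $1/a$ with finite angular derivative $a/m$. By the Julia--Wolff--Carath\'eodory chain rule for angular derivatives, $w_0:=\omega_1(1/a)$ exists as a boundary value, $\omega_1$ has a finite positive angular derivative $d_1$ at $1/a$, and $\eta_\mu$ has angular limit $1$ at $w_0$ with finite angular derivative; since $w_0=0$ would give $\eta_\mu(\omega_1(z))\to\eta_\mu(0)=0$ and $w_0=\infty$ would give $\eta_\mu(\omega_1(z))\to\eta_\mu(\infty)=1-\mu(\{0\})^{-1}\le 0$ (or $\infty$ when $\mu(\{0\})=0$), we must have $w_0\in(0,\infty)$, whence $\mu$ has an atom at $u:=1/w_0$ with $\mu(\{u\})=u/d(\eta_\mu)(w_0)$. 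Symmetrically $\nu$ has an atom at $v:=1/\omega_2(1/a)$ with $\nu(\{v\})=v/d(\eta_\nu)(1/v)$, and letting $z\to1/a$ in $\omega_1(z)\omega_2(z)=z\,\eta_{\mu\boxtimes\nu}(z)$ yields $uv=a$. Finally, expanding this product relation to first order at $z=1/a$ --- with $z\,\eta_{\mu\boxtimes\nu}(z)=\tfrac1a+(1+\tfrac1m)(z-\tfrac1a)+o(z-\tfrac1a)$ on one side and the linearizations of $\omega_1,\omega_2$ on the other --- and inserting the chain-rule relations $d_1=v\,\mu(\{u\})/m$ and $d_2=u\,\nu(\{v\})/m$ (where $d_i$ is the angular derivative of $\omega_i$ at $1/a$), I would solve the resulting linear equation $\tfrac{d_2}{u}+\tfrac{d_1}{v}=1+\tfrac1m$ to get $\mu(\{u\})+\nu(\{v\})=m+1$, i.e. $(\mu\boxtimes\nu)(\{a\})=\mu(\{u\})+\nu(\{v\})-1$. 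Combined with the previous step and the uniqueness of the admissible pair, this establishes (a)$\Leftrightarrow$(b) and the mass formula.

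For part (2), from $\langle x^{1/2}yx^{1/2}\xi,\xi\rangle=\|y^{1/2}x^{1/2}\xi\|^2$ we get $\ker(x^{1/2}yx^{1/2})=\ker(y^{1/2}x^{1/2})\supseteq\ker x$, so $(\mu\boxtimes\nu)(\{0\})\ge\mu(\{0\})$, and the $y^{1/2}xy^{1/2}$-picture gives $(\mu\boxtimes\nu)(\{0\})\ge\nu(\{0\})$; hence $(\mu\boxtimes\nu)(\{0\})\ge\max\{\mu(\{0\}),\nu(\{0\})\}$. For the reverse inequality I would run the subordination analysis in the limit $a\downarrow0$, using the boundary behaviour of $\omega_1,\omega_2$ near $0$ to bound the mass of $\mu\boxtimes\nu$ at $0$ by the larger of $\mu(\{0\})$ and $\nu(\{0\})$. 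I expect the main obstacle to be precisely this regularity bookkeeping in the (a)$\Rightarrow$(b) step: one must verify that $\omega_1$ and $\omega_2$ are angularly --- not merely radially --- continuous at $1/a$ with finite, positive angular derivatives there, so that the Julia--Carath\'eodory chain rule genuinely applies and the first-order expansion of the product relation is legitimate. The operator half (the lower bounds, and $\ge\max$ in part (2)) is routine once the free-projection identity $\tau(p\wedge q)=\max\{0,\tau(p)+\tau(q)-1\}$ is granted; it is the upper bounds, which truly require the full subordination and Julia--Carath\'eodory package, that carry the weight of the theorem.
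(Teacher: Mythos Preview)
The paper does not prove this statement. Theorem~\ref{SB} appears in the Preliminaries section as a quoted result, introduced by ``The following Theorem was proved by Serban Belinschi in \cite{B}'', and no proof is given in the paper itself. So there is no ``paper's own proof'' to compare your proposal against.

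For what it is worth, your outline is broadly the strategy Belinschi uses in \cite{B}: the lower bound and the implication (b)$\Rightarrow$(a) come from the free-projection identity $\tau(p\wedge q)=\max\{0,\tau(p)+\tau(q)-1\}$, while the converse and the exact mass formula are obtained from multiplicative subordination combined with the Julia--Carath\'eodory boundary theory for $\eta$-transforms. You have correctly identified the delicate point: the existence and positivity of the angular limits and angular derivatives of $\omega_1,\omega_2$ at $1/a$, and the legitimacy of the chain rule there. Your sketch of part~(2) is incomplete on the upper-bound side (``run the subordination analysis in the limit $a\downarrow 0$'' is not yet an argument), but since the paper under review simply cites the result, this is beside the point for the present comparison.
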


\vspace{0.2cm}
\noindent  In \cite{NS} Nica and Speicher introduced the class of $R$--diagonal operators in a non commutative C$^{*}$-probability space. An operator $T$ is $R$--diagonal if $T$ has the same $*$--distribution as a product $UH$ where $U$ and $H$ are $*$--free, $U$ is a Haar unitary, and $H$ is positive.  
\vspace{0.2cm}
\noindent The next Theorem and Corollary were proved by Uffe Haagerup and Flemming Larsen (\cite{HaF}, Theorem 4.4 and the Corollary following it) where they completely characterized the Brown measure of an $R$--diagonal element.
\vspace{0.3cm}
\begin{teo}\label{Mteo}
Let $(M,\tau)$ be a non--commutative finite von Neumann algebra with a faithful trace $\tau$. Let $u$ and $h$ be $*$--free random variables in $M$, $u$ a Haar unitary, $h\geq 0$ and assume that the distribution $\mu_{h}$ for $h$ is not a Dirac measure. Denote $\mu_{T}$ the Brown measure for $T=uh$. Then
\begin{enumerate}
\item $\mu_{T}$ is rotation invariant and 
$$
\mathrm{supp}(\mu_{T})=[\norm{h^{-1}}_{2}^{-1},\norm{h}_2]\times_{p}[0,2\pi).
$$

\item The $S$ transform $S_{h^{2}}$ of $h^{2}$ has an analytic continuation to neighborhood of the interval $(\mu_{h}(\{0\})-1,0]$, $S_{h^{2}}((\mu_{h}(\{0\})-1,0])=[\norm{h}_{2}^{-2},\norm{h^{-1}}_{2}^{2})$ and $S_{h^{2}}^{'}<0$ on $(\mu_{h}(\{0\})-1,0)$.

\vspace{0.3cm}

\item $\mu_{T}(\{0\})=\mu_{h}(\{0\})$ and $\mu_{T}(B(0,S_{h^2}(t-1)^{-1/2})=t$ for $t\in (\mu_{h}(\{0\}),1]$.

\vspace{0.3cm}

\item $\mu_{T}$ is the only rotation symmetric probability measure satisfying (3).
\end{enumerate}
\end{teo}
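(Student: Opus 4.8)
\noindent\emph{Proof sketch.} The plan is to follow the usual template for computing Brown measures, specialised to the $R$--diagonal structure (broadly, the strategy of Haagerup and Larsen). Recall that for $S\in M$ the Fuglede--Kadison determinant is $\Delta(S)=\exp\tau(\log|S|)$ (defined via $\varepsilon\downarrow0$ when $|S|$ is not invertible), that $\lambda\mapsto\log\Delta(T-\lambda)$ is subharmonic on $\C$, and that the Brown measure $\mu_T$ equals $\tfrac1{2\pi}$ times the distributional Laplacian of $\lambda\mapsto\log\Delta(T-\lambda)$; equivalently $\mu_T$ is the unique compactly supported probability measure with $\int\log|z-\lambda|\,d\mu_T(z)=\log\Delta(T-\lambda)$ for every $\lambda\in\C$. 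Write $T=uh$ with $u$ a Haar unitary $*$--free from $h=|T|\ge0$ and set $\mu:=\mu_{h^2}$. First I would prove rotation invariance: for $\theta\in\R$ the element $e^{i\theta}u$ is again a Haar unitary $*$--free from $h$, so $e^{i\theta}T=(e^{i\theta}u)h$ has the same $*$--distribution as $T$, and hence the same Brown measure; since $\mu_{e^{i\theta}T}$ is also the push--forward of $\mu_T$ under $z\mapsto e^{i\theta}z$, the measure $\mu_T$ is rotation invariant. Therefore $\mu_T$ is determined by the radial distribution function $F(r):=\mu_T(\overline{B(0,r)})$, which gives part~(4) once $F$ is identified, while $\mu_T(\{0\})$ equals the trace of the projection onto $\ker T=\ker h$, namely $\mu_h(\{0\})=\mu(\{0\})$.

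The next step reduces everything to the single function $s\mapsto\log\Delta(T-s)$, $s>0$. By rotation invariance and the identity $\tfrac1{2\pi}\int_0^{2\pi}\log|re^{i\theta}-s|\,d\theta=\log\max(r,s)$ one has $\log\Delta(T-s)=\int_0^\infty\log\max(r,s)\,dF(r)$, and differentiating in $s$ (the boundary terms cancel) yields the basic identity
$$
F(s)=s\,\frac{d}{ds}\log\Delta(T-s).
$$
Since $\Delta(u)=1$ and $T-s=u(h-su^*)$, one has $\log\Delta(T-s)=\log\Delta(h-su^*)=\tfrac12\int\log t\,d\mu_s(t)$, where $\mu_s$ is the spectral distribution of the positive element $(h-su^*)^*(h-su^*)=h^2+s^2-s(hu^*+uh)$; here I would work with the regularisations $\log\Delta_{\varepsilon}(T-s)=\tfrac12\tau(\log((T-s)^*(T-s)+\varepsilon))$, for which $\tfrac{d}{ds}\log\Delta_{\varepsilon}(T-s)=-\mathrm{Re}\,\tau\big((T-s)((T-s)^*(T-s)+\varepsilon)^{-1}\big)$, and let $\varepsilon\downarrow0$. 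Thus the whole theorem comes down to computing $\mu_s$, equivalently $\log\Delta(h-su^*)$, in terms of $\mu$ and $s$.

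That computation is the crux, and is where the $R$--diagonal structure is genuinely used. I would pass to $M_2(M)$ with amalgamation over $D=M_2(\C)$: the hermitisation of $h-su^*$ (the selfadjoint $2\times2$ matrix whose off-diagonal entries are $h-su^*$ and $h-su$) equals $\mathcal{H}(h)-s\,\mathcal{H}(u^*)$, where $\mathcal{H}(h)$ is the hermitisation of $h$ and $\mathcal{H}(u^*)$ that of $u^*$ (a symmetry); these two summands are free over $D$ precisely because $h$ and $u$ are $*$--free in $M$, the first has $D$--valued distribution determined by $\mu$ and the second is $s$ times a symmetry. Solving the $D$--valued subordination equation for this additive free convolution produces the $M_2$--valued Cauchy transform of the hermitisation, hence $\mu_s$ and then $\log\Delta(T-s)$ (alternatively one may run the combinatorics of $R$--diagonal $*$--distributions). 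The outcome should be: for $s\in(\norm{h^{-1}}_2^{-1},\norm{h}_2]$ the number $s\,\tfrac{d}{ds}\log\Delta(T-s)$ is the unique $t\in(\mu(\{0\}),1]$ with $s=S_{\mu}(t-1)^{-1/2}$, whereas below and above that interval it is $\mu(\{0\})$ and $1$ respectively; together with $F(s)=s\,\tfrac{d}{ds}\log\Delta(T-s)$ this is exactly statements~(1) and~(3). Part~(2) is a routine consequence of the properties of $\psi_\mu$ recalled in Section~2 (analyticity across $(-\infty,0)$, the monotonicity $\psi_\mu'>0$, and the asymptotics of $\psi_\mu$ at $0$ and at $-\infty$), which pin down $S_\mu$, give $S_\mu'<0$, and yield $S_\mu((\mu(\{0\})-1,0])=[\norm{h}_2^{-2},\norm{h^{-1}}_2^2)$ --- the strict inequality needed there being Cauchy--Schwarz together with $\mu$ not being a point mass. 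The hard part is Step~3 --- turning the $*$--freeness of $h$ and $u$ into a closed form for $\log\Delta(T-s)$ --- plus the technical care needed in the $\varepsilon$--regularisation and in differentiating $F$ at the two edges of the annulus; everything else is soft.
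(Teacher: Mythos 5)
A preliminary remark: the paper does not prove this theorem at all --- it is imported verbatim from Haagerup and Larsen (\cite{HaF}, Theorem 4.4) --- so there is no internal proof to measure you against; your sketch is essentially an outline of the Haagerup--Larsen strategy. The soft parts of that outline are right: rotation invariance because $e^{i\theta}u$ is again a Haar unitary $*$--free from $h$; the reduction of $\mu_T$ to the radial distribution function $F(s)=\mu_T(\overline{B(0,s)})$ via the characterisation of the Brown measure through $\lambda\mapsto\log\Delta(T-\lambda)$; the identity $F(s)=s\,\frac{d}{ds}\log\Delta(T-s)$ coming from $\frac{1}{2\pi}\int_0^{2\pi}\log|re^{i\theta}-s|\,d\theta=\log\max(r,s)$; and part (2) as a computation with $\psi_{\mu_{h^2}}$ plus Cauchy--Schwarz.

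However, there is a genuine gap exactly where the theorem lives. The entire content of (1) and (3) is the evaluation $s\,\frac{d}{ds}\log\Delta(T-s)=t$ when $s=S_{h^2}(t-1)^{-1/2}$ for $t\in(\mu_h(\{0\}),1]$, together with the constancy of $F$ below and above the annulus; you declare this to be ``the outcome'' of an $M_2(\C)$--valued subordination analysis of the hermitisation of $h-su^*$, but you neither set up nor solve those equations, and you give no indication of how the $S$--transform emerges from them. (Haagerup and Larsen obtain it by identifying the symmetrized singular-value distribution of $T-s$ as the free additive convolution of the symmetrization of $\mu_h$ with $\frac12(\delta_s+\delta_{-s})$ and then evaluating the Fuglede--Kadison determinant through $S$--transform identities; whichever route one takes, this computation \emph{is} the proof, and it is absent.) A second, smaller but real, error: you justify $\mu_T(\{0\})=\mu_h(\{0\})$ by asserting that the Brown measure's mass at $0$ equals $\tau$ of the projection onto $\ker T$. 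That is false as a general principle --- a nonzero nilpotent has Brown measure $\delta_0$ while its kernel projection has trace strictly less than $1$ --- and for $R$--diagonal $T$ the equality is a conclusion of the theorem (obtained, e.g., by letting $t\downarrow\mu_h(\{0\})$ in the radial formula), not an a priori fact. As it stands, then, the proposal is a plausible strategy outline rather than a proof.
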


\vspace{0.3cm}

\begin{cor}\label{Mcor}
With the notation as in the last Theorem we have 
\begin{enumerate}
\item the function $F(t)=S_{h^2}(t-1)^{-1/2}:(\mu_{h}(\{0\}),1]\to (\norm{h^{-1}}_{2}^{-1},\norm{h}_2]$ has an analytic continuation to a neighboorhood of its domain and $F^{\,'}>0$ on $(\mu_{h}(\{0\}),1)$.

\vspace{0.3cm}
\item $\mu_{T}$ has a radial density function $f$ on $(0,\infty)$ defined by 
$$g(s)=\frac{1}{2\pi s}(F^{<-1>})^{'}(s)\,\mathbf{1}_{(F(\mu_{h}(\{0\})),F(1)]}(s).$$

\end{enumerate}

\vspace{0.2cm}
\noindent Therefore, $\mu_{T}=\mu_{h}(\{0\})\delta_{0}+\sigma$ with $d\sigma=g(|\lambda|)dm_{2}(\lambda)$.
\end{cor}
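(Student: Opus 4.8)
The plan is to read both claims off Theorem~\ref{Mteo} by unwinding notation, the only substantive step being a change of variables that turns the radial distribution function of $\mu_T$ into a density. First I would prove (1). By Theorem~\ref{Mteo}(2), $S_{h^2}$ continues analytically past $0$ to a neighborhood of $(\mu_h(\{0\})-1,0]$, is strictly decreasing there, and maps that interval onto $[\norm{h}_2^{-2},\norm{h^{-1}}_2^2)$, a set of strictly positive reals. Hence $t\mapsto S_{h^2}(t-1)$ is analytic, strictly decreasing, and strictly positive on a neighborhood of $(\mu_h(\{0\}),1]$, so composing with the analytic strictly decreasing map $w\mapsto w^{-1/2}$ shows that $F(t)=S_{h^2}(t-1)^{-1/2}$ is analytic on a neighborhood of its domain and, by the chain rule, $F'>0$ on $(\mu_h(\{0\}),1)$. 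Reading off endpoints, $F(1)=(\norm{h}_2^{-2})^{-1/2}=\norm{h}_2$ and $F(t)\to\norm{h^{-1}}_2^{-1}$ as $t\downarrow\mu_h(\{0\})$, so $F$ is an increasing analytic bijection of $(\mu_h(\{0\}),1]$ onto $(\norm{h^{-1}}_2^{-1},\norm{h}_2]$; consequently $F^{<-1>}$ is an increasing analytic bijection in the reverse direction with $(F^{<-1>})'>0$.

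Next, by Theorem~\ref{Mteo}(1) the measure $\mu_T$ is rotation invariant with support the closed annulus $\norm{h^{-1}}_2^{-1}\le|\lambda|\le\norm{h}_2$ together with the origin, and by Theorem~\ref{Mteo}(3), $\mu_T(\{0\})=\mu_h(\{0\})$ and $\mu_T(B(0,F(t)))=t$ for $t\in(\mu_h(\{0\}),1]$. Substituting $s=F(t)$, the function $R(s):=\mu_T(B(0,s))$ equals $F^{<-1>}(s)$ on $(\norm{h^{-1}}_2^{-1},\norm{h}_2]$, equals $\mu_h(\{0\})$ on $(0,\norm{h^{-1}}_2^{-1}]$, and equals $1$ for $s>\norm{h}_2$; since the boundary values of $F^{<-1>}$ are $\mu_h(\{0\})$ at $\norm{h^{-1}}_2^{-1}$ and $1$ at $\norm{h}_2$, $R$ is continuous on $(0,\infty)$ and real-analytic on each of the three pieces. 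Any rotation-invariant $\sigma$ with $d\sigma=g(|\lambda|)\,dm_2(\lambda)$ has $\sigma(B(0,s))=2\pi\int_0^s g(r)\,r\,dr$, so matching this against $R(s)-\mu_h(\{0\})$ and differentiating forces $2\pi s\,g(s)=R'(s)$, i.e.
$$
g(s)=\frac{1}{2\pi s}\,(F^{<-1>})'(s)\,\mathbf{1}_{(\norm{h^{-1}}_2^{-1},\,\norm{h}_2]}(s)=\frac{1}{2\pi s}\,(F^{<-1>})'(s)\,\mathbf{1}_{(F(\mu_h(\{0\})),\,F(1)]}(s),
$$
the last equality using $F(\mu_h(\{0\}))=\norm{h^{-1}}_2^{-1}$ and $F(1)=\norm{h}_2$ from step one. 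Here $g\ge 0$ because $(F^{<-1>})'>0$, and $2\pi\int_0^\infty g(r)\,r\,dr=R(\infty)-\mu_h(\{0\})=1-\mu_h(\{0\})$, so $\mu_h(\{0\})\delta_0+\sigma$ is a rotation-symmetric probability measure, which by construction assigns mass $t$ to $B(0,F(t))$ and hence satisfies Theorem~\ref{Mteo}(3); the uniqueness clause Theorem~\ref{Mteo}(4) then gives $\mu_T=\mu_h(\{0\})\delta_0+\sigma$, which is the final ``Therefore'' assertion.

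The one delicate point is the bookkeeping at the radii $\norm{h^{-1}}_2^{-1}$ and $\norm{h}_2$: one must check that the one-sided limits of $F^{<-1>}$ there match the constant values of $R$ on the adjacent flat pieces, for otherwise $R$ would jump and $\mu_T$ would acquire mass on an entire circle, contradicting rotation invariance together with finiteness; and one must keep track of which endpoints are open or closed in Theorem~\ref{Mteo}(3) to end up with exactly the half-open indicator $\mathbf{1}_{(F(\mu_h(\{0\})),F(1)]}$. Everything else is a direct transcription of Theorem~\ref{Mteo}.
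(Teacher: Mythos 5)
Your argument is correct. The paper gives no proof of this corollary at all --- it is quoted from Haagerup--Larsen (\cite{HaF}, the corollary following Theorem 4.4) --- so there is no internal proof to compare against; your derivation from Theorem \ref{Mteo} (analyticity, $F'>0$ and the endpoint values from part (2), then building the rotation-symmetric candidate $\mu_h(\{0\})\delta_0+\sigma$ with radial density $\frac{1}{2\pi s}(F^{<-1>})'(s)$ and closing the loop with the uniqueness clause (4)) is the standard route taken in that reference, and your bookkeeping at the endpoints, including reading $F(\mu_h(\{0\}))$ as the limit $\norm{h^{-1}}_2^{-1}$, is sound.
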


\section{Main Results}

\vspace{0.2cm}
\noindent In this Section we prove our main results. Let us first fix some notation. We say two operators $A$ and $B$ in a finite von Neumann algebra $(\mathcal{N},\tau)$ have the same $*$--distribution iff $\tau(p(A,A^{*}))=\tau(p(B,B^{*}))$ for all non--commutative polynomials $p\in\C\langle X,Y \rangle$. In this case we denote $A\sim_{*d} B$. If $A$ and $B$ are self--adjoint we say that $A$ and $B$ have the same distribution and we denote it by $A\sim_{d} B$.

\vspace{0.3cm}
\begin{lem}
Let $\{T_{k}\}_{k=1}^{\infty}$ be a family of $*$--free identically distributed operators in a finite von Neumann algebra. Let $a_{k}=|T_{k}|$ be the modulus of $T_{k}$. Then the positive operators $B_{n}=T_{1}^{*}T_{2}^{*}\ldots T_{n}^{*}T_{n}\ldots T_{2}T_{1}$ and $b_{n}=a_{1}a_{2}\ldots a_{n}^{2}\ldots a_{2}a_{1}$ have the same distribution.
\end{lem}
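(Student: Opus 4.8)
The plan is to reduce the statement to a single recursion for free multiplicative convolution. Write $X_n=T_nT_{n-1}\cdots T_1$ and $Y_n=a_na_{n-1}\cdots a_1$; since each $a_k$ is self--adjoint one has $B_n=X_n^*X_n$ and $b_n=Y_n^*Y_n$. Let $\mu$ be the distribution of $|T_1|^2$. I will show that the distribution of $B_n$ equals $\mu\boxtimes(\text{distribution of }B_{n-1})$ and, by the verbatim same argument with the $a_k$'s in place of the $T_k$'s, that the distribution of $b_n$ equals $\mu\boxtimes(\text{distribution of }b_{n-1})$. Since $B_1=|T_1|^2$ and $b_1=a_1^2$ are both distributed as $\mu$, an induction on $n$ then yields that $B_n$ and $b_n$ are both distributed as the $n$--fold free multiplicative convolution power $\mu^{\boxtimes n}$, which is the assertion.

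Two elementary facts carry the argument. First, in a finite von Neumann algebra $(\mathcal{N},\tau)$ and for any $X\in\mathcal{N}$, the positive operators $X^*X$ and $XX^*$ have the same distribution: the continuous parts of their spectral measures agree because $\tau(f(XX^*))=\tau(f(X^*X))$ for every continuous $f$ with $f(0)=0$ (functional calculus through the polar decomposition of $X$), and the atoms at $0$ agree because the range projections of $X$ and $X^*$ are Murray--von Neumann equivalent, so $\tau(\ker X)=\tau(\ker X^*)$. Second, if $X=u|X|$ is the polar decomposition of $X$ and $B\geq 0$, then $XBX^*$ and $|X|\,B\,|X|=(|X|^2)^{1/2}B(|X|^2)^{1/2}$ have the same distribution; both are positive, so it is enough to match moments, and for each $m\geq 1$ traciality together with $u^*u=s(|X|)$ (the support projection of $|X|$) and $s(|X|)\,|X|=|X|$ give
$$
\tau\big((XBX^*)^m\big)=\tau\big((u^*u\,|X|B|X|)^m\big)=\tau\big((|X|B|X|)^m\big).
$$

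Now the recursion unwinds directly. By the first fact, $B_n=X_n^*X_n\sim_d X_nX_n^*=T_n\big(X_{n-1}X_{n-1}^*\big)T_n^*$, and by the second fact (with $X=T_n$, $B=X_{n-1}X_{n-1}^*\geq 0$) this is $\sim_d (|T_n|^2)^{1/2}\big(X_{n-1}X_{n-1}^*\big)(|T_n|^2)^{1/2}$. Here $|T_n|^2\in W^*(T_n)$ is free from $X_{n-1}X_{n-1}^*\in W^*(T_1,\dots,T_{n-1})$, so this operator is distributed as $\mu_{|T_n|^2}\boxtimes\mu_{X_{n-1}X_{n-1}^*}$, which by the first fact (applied to $X_{n-1}$) and $\mu_{|T_n|^2}=\mu$ equals $\mu\boxtimes\mu_{B_{n-1}}$. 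Running the same chain with $a_k$ replacing $T_k$ --- so $X_k$ becomes $Y_k$, $Y_nY_n^*=a_n\big(Y_{n-1}Y_{n-1}^*\big)a_n$, and $Y_{n-1}^*Y_{n-1}=b_{n-1}$ --- shows $b_n$ is distributed as $\mu\boxtimes\mu_{b_{n-1}}$, and the induction closes.

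The one point that needs care, and the only real obstacle, is that the polar part $u_k$ of $T_k$ is in general \emph{not} free from $|T_k|$, so one may not simply ``replace $T_k$ by $a_k$'' inside the word defining $B_n$. The device that sidesteps this is to compare only positive operators of the conjugated shape $XBX^*$, in which $u_k$ is absorbed under the trace into the support projection of $|T_k|$; the passage from $X^*X$ to $XX^*$ at each stage, which reproduces this shape, is exactly the step that requires $\tau$ to be a trace.
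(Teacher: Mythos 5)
Your proof is correct, but it takes a genuinely different route from the paper's. The paper also starts from the polar decomposition $T_k=u_k a_k$, but then extends each partial isometry $u_k$ to a unitary (possible in a finite von Neumann algebra) and argues by induction on $n$: the induction hypothesis applied to $T_2,\dots,T_n$ together with freeness replaces the middle word by $a_2\cdots a_n^2\cdots a_2$, after which the rewriting $B_n\sim_d u_1^*(u_1a_1u_1^*)(a_2\cdots a_n^2\cdots a_2)(u_1a_1u_1^*)u_1$, invariance of distribution under unitary conjugation, and the observation that $u_1a_1u_1^*\sim_d a_1$ and is free from $\{a_k\}_{k\geq 2}$ close the induction. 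You instead establish the two recursions $\mu_{B_n}=\mu\boxtimes\mu_{B_{n-1}}$ and $\mu_{b_n}=\mu\boxtimes\mu_{b_{n-1}}$ via the tracial facts $X^*X\sim_d XX^*$ and $XBX^*\sim_d |X|B|X|$ (absorbing $u^*u=s(|X|)$ under the trace), which avoids extending the partial isometries to unitaries and avoids the substitution step that invokes the induction hypothesis inside a word; it also yields the sharper identification that the common law is $\mu^{\boxtimes n}$, a fact the paper never states explicitly in this lemma. Both arguments ultimately rest on traciality and on keeping the non-free polar part away from the positive part: yours localizes this in the cyclic swap $X^*X\leftrightarrow XX^*$, the paper's in unitary-conjugation invariance. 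Your closing remark that $u_k$ need not be free from $|T_k|$ correctly identifies the obstruction that both proofs are designed to circumvent. (Minor aside: your separate argument matching the atoms at $0$ in the first fact is not needed, since equality of all moments already determines a compactly supported probability measure, atoms included; it is harmless, though.)
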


\begin{proof}
Let $T_{k}=u_{k}a_{k}$ be the polar decomposition of the operator $T_{k}$. Since we are in a finite von Neumann algebra we can always extend $u_{k}$ to be a unitary (see \cite{Tak}). We will proceed by induction on $n$. The case $n=1$ is obvious since $T_{1}^{*}T_{1}=a_{1}^{2}$. Assume now that $B_{k}$ has the same distribution as $b_{k}$ for $k<n$. Then by $*$--freeness and the induction hypothesis
$$
B_{n}=T_{1}^{*}T_{2}^{*}\ldots T_{n}^{*}T_{n}\ldots T_{2}T_{1}\sim_{d} (u_{1}a_{1})^{*}(a_{2}\ldots a_{n}^{2}\ldots a_{2})(u_{1}a_{1}).
$$
Hence
$$
B_{n}\sim_{d}a_{1}u_{1}^{*}(a_{2}\ldots a_{n}^{2}\ldots a_{2})u_{1}a_{1}=u_{1}^{*}(u_{1}a_{1}u_{1}^{*})(a_{2}\ldots a_{n}^{2}\ldots a_{2})(u_{1}a_{1}u_{1}^{*})u_{1}.$$
Since conjugating by a unitary does not alter the distribution we see that
$$
B_{n}\sim_{d} (u_{1}a_{1}u_{1}^{*})(a_{2}\ldots a_{n}^{2}\ldots a_{2})(u_{1}a_{1}u_{1}^{*}).
$$
Since the operators $\{T_{k}\}_{k=1}^{\infty}$ are $*$--free then $\{\{u_{k},a_{k}\}\}_{k}^{\infty}$ is a $*$--free family and $a_{1}\sim_{d}{u_{1}a_{1}u_{1}^{*}}$ and are free with respect to $\{a_{k}\}_{k\geq 2}$. Then, by freeness, 
$$
B_{n}\sim_{d}(u_{1}a_{1}u_{1}^{*})(a_{2}\ldots a_{n}^{2}\ldots a_{2})(u_{1}a_{1}u_{1}^{*})\sim_{d} a_{1}a_{2}\ldots a_{n}^{2}\ldots a_{2}a_{1}
$$
concluding the proof.
\end{proof}

\noindent Now we are ready to prove our main Theorem.

\begin{teo}\label{main}
Let $\{T_{k}\}_{k}$ be a sequence of $*$--free equally distributed operators. Let $\mu$ in $\mathcal{M}_{+}$ be the distribution of $|T_{k}|^{2}$ and let $B_{n}$ be as in the previous Lemma. The sequence of positive operators $B_{n}^{\frac{1}{2n}}$ converges in distribution to a positive operator $\Lambda$ with distribution $\nu$ in $\mathcal{M}_{+}$. Moreover,
$$
\nu=\beta\delta_{0}+\sigma\quad\quad\text{with}\quad\quad d\sigma=f(t)\,\mathbf{1}_{\big(\norm{|T_{1}|^{-1}}_2^{-1},\norm{|T_{1}|}_2\big]}(t)\,dt
$$
where $\beta=\mu(\{0\})$, $f(t)=\big(F^{<-1>}_{\mu}\big)'(t)$ and $F_{\mu}(t)=S_{\mu}(t-1)^{-1/2}$.
\end{teo}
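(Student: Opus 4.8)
The plan is to reduce to the positive operators $b_n = a_1a_2\cdots a_n^{2}\cdots a_2a_1$, compute their distributions through free multiplicative convolution, and then control the scaling limit by means of the Haagerup--Larsen description of Brown measures of $R$--diagonal elements (Theorem~\ref{Mteo} and Corollary~\ref{Mcor}).

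By the previous Lemma it suffices to treat $b_n$, with $a_k = |T_k|$, and to prove that the spectral distribution of $b_n^{1/2n}$ converges weakly. Since $a_2\cdots a_n^{2}\cdots a_2$ has the distribution of $b_{n-1}$ and is free from $a_1$, the definition of $\boxtimes$ recalled in the introduction gives $\mu_{b_n} = \mu_{a_1^{2}}\boxtimes\mu_{b_{n-1}}$, and iterating yields $\mu_{b_n} = \mu^{\boxtimes n}$, so $S_{\mu_{b_n}} = S_\mu^{\,n}$. By Theorem~\ref{Mteo}(2) this holds on a neighbourhood of $(\beta-1,0]$, where $S_\mu$ is strictly decreasing with range $[\,\norm{|T_1|}_2^{-2},\norm{|T_1|^{-1}}_2^{2})$; hence, by Corollary~\ref{Mcor}(1), $F_\mu(t) = S_\mu(t-1)^{-1/2}$ is an increasing bijection of $(\beta,1]$ onto $\big(\norm{|T_1|^{-1}}_2^{-1},\norm{|T_1|}_2\big]$.

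To identify the candidate limit, fix a single $R$--diagonal element $Y = vh$ with $v$ a Haar unitary $*$--free from the positive operator $h$ and $h^{2}$ of distribution $\mu$ (the point--mass case being immediate, assume $\mu$ is not one). By Theorem~\ref{Mteo} and Corollary~\ref{Mcor}, the Brown measure $\mu_Y$ is rotation invariant, has an atom $\beta = \mu(\{0\})$ at $0$, is supported radially on $\big[\norm{h^{-1}}_2^{-1},\norm{h}_2\big]$, and its push--forward under $\lambda\mapsto|\lambda|$ is precisely the measure $\nu$ in the statement: the density $f(t) = (F^{<-1>}_\mu)'(t)$ equals $2\pi t\,g(t)$, with $g$ the radial density of Corollary~\ref{Mcor}(2), which is exactly the Jacobian factor relating a rotation--invariant planar density to its radial projection. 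Moreover $Y^{n}$ is again $R$--diagonal and, by multiplicativity of the $S$--transform, $\mu_{|Y^{n}|^{2}} = \mu_{|Y|^{2}}^{\boxtimes n} = \mu^{\boxtimes n} = \mu_{b_n}$; therefore $b_n^{1/2n}\sim_d (Y^{*n}Y^{n})^{1/2n}$, and the theorem reduces to showing that $(Y^{*n}Y^{n})^{1/2n}$ converges in distribution to the radial part of $\mu_Y$.

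Equivalently, writing $Q_n(t)$ for the $t$--quantile of $\mu^{\boxtimes n}$, one must show $Q_n(t)^{1/2n}\to F_\mu(t)$ for $t\in(\beta,1]$, with $Q_n(t) = 0$ for $t\le\beta$ since $\mu^{\boxtimes n}(\{0\}) = \beta$ by Theorem~\ref{SB}(2). Given this, the distributions $\nu_n$ of $b_n^{1/2n}$ lie in a fixed bounded interval, so weak convergence follows from convergence of the quantile functions, and $\int t^{m}\,d\nu_n = \int_0^1 Q_n(t)^{m/2n}\,dt\to\int_\beta^1 F_\mu(t)^{m}\,dt = \int t^{m}\,d\nu$ for every $m$ identifies the limit as $\nu$. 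To obtain the quantile asymptotics one studies the analytic continuation of $\chi_{\mu^{\boxtimes n}}(z) = \frac{z}{1+z}S_\mu(z)^{n}$ and the boundary behaviour of its compositional inverse $\psi_{\mu^{\boxtimes n}}$ near the positive real axis, from which the support and quantiles of $\mu^{\boxtimes n}$ are read off; on the $\frac1{2n}\log$--scale the factor $S_\mu(z)^{n}$ dominates, the contribution of $\frac{z}{1+z}$ is washed out, and $\frac1{2n}\log Q_n(t)\to-\frac12\log S_\mu(t-1)$, i.e. $Q_n(t)^{1/2n}\to S_\mu(t-1)^{-1/2} = F_\mu(t)$. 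The endpoints (at $t = 1$ and $t\to\beta^{+}$) are located by the range statement of Theorem~\ref{Mteo}(2), the top one alternatively by the sub--exponential bound $\norm{b_n}\le Kn\norm{a_1^{2}}$ of \cite{Vla}. \emph{The main obstacle} is precisely this last step: the exponential--scale asymptotics of the quantiles of $\mu^{\boxtimes n}$, equivalently the convergence of $(Y^{*n}Y^{n})^{1/2n}$ to the radial part of the Brown measure of a single factor --- the genuinely free feature of the theorem (classically the analogous limit collapses to the constant $e^{\tau(\log a_1)}$) --- which rests on the analytic control of $S_\mu^{\,n}$ near the positive axis; the remainder is the routine assembly of the Lemma, the $\boxtimes$--computation, and Corollary~\ref{Mcor}.
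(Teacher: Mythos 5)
Your overall architecture matches the paper's: reduce to $b_{n}=a_{1}a_{2}\cdots a_{n}^{2}\cdots a_{2}a_{1}$, identify $\mu_{b_{n}}$ with $\mu^{\boxtimes n}$, realize this as $|Y^{n}|^{2}$ for a single $R$--diagonal element $Y=vh$, and read the limit law off the Haagerup--Larsen description of the Brown measure (Theorem \ref{Mteo}, Corollary \ref{Mcor}), including the correct Jacobian relation $f(t)=2\pi t\,g(t)$ and the endpoints of the support. But the step you yourself label ``the main obstacle'' --- that $\bigl[(Y^{*})^{n}Y^{n}\bigr]^{1/2n}$ converges in distribution and that its limit is the radial part of $\mu_{Y}$, equivalently that the $t$--quantiles of $\mu^{\boxtimes n}$ satisfy $Q_{n}(t)^{1/2n}\to S_{\mu}(t-1)^{-1/2}$ --- is the entire mathematical content of the theorem, and you do not prove it. The heuristic that in $\chi_{\mu^{\boxtimes n}}(z)=\frac{z}{1+z}S_{\mu}(z)^{n}$ the factor $\frac{z}{1+z}$ is ``washed out'' on the $\frac{1}{2n}\log$--scale is plausible but unjustified: quantiles of $\mu^{\boxtimes n}$ are not read off from $\chi_{\mu^{\boxtimes n}}$ by any routine argument, and making this precise is a nontrivial theorem in its own right (it is essentially a law of large numbers for $\boxtimes$). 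As written, the proposal therefore establishes neither the existence of the limit $\Lambda$ nor the identification of $\nu$.

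The paper closes exactly this gap by quoting Haagerup--Schultz \cite{HaS} rather than analyzing $S_{\mu}^{\,n}$ directly: Theorem 2.2 of \cite{HaS} gives, for an arbitrary element $T$ of a II$_{1}$ factor, convergence in distribution of $\bigl[(T^{*})^{n}T^{n}\bigr]^{1/2n}$ to a positive operator $\Lambda$, and Theorem 2.5 of \cite{HaS} gives the moment identity $\lim_{n}\tau\bigl([(T^{*})^{n}T^{n}]^{p/2n}\bigr)=\int_{\C}|\lambda|^{p}\,d\mu_{T}(\lambda)$. Applying these to the $R$--diagonal $T=uh$ (after checking, via the free family $u^{k}h(u^{*})^{k}$, that $(T^{*})^{n}T^{n}\sim_{d}b_{n}$), and then matching moments of compactly supported measures against the explicit radial density of Corollary \ref{Mcor}, yields the stated formula for $\nu$, with the degenerate case $\mu=\delta_{\lambda}$ handled separately. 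If you want to keep your quantile route, you must supply a genuine proof of the asymptotics of $\mu^{\boxtimes n}$; otherwise the correct fix is to invoke \cite{HaS} as the paper does, at which point the rest of your assembly goes through.
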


\begin{proof}
From the previous Lemma it is enough to prove the Theorem for $a_{k}=|T_{k}|$. Let $u$ a Haar unitary $*$--free with respect to the family $\{a_{k}\}_{k}$ and let $h=a_{1}$. Let $T$ be the $R$--diagonal operator defined by $T=uh$.  Given $u$ a Haar unitary and $h$ a positive operator $*$--free from $h$ it is known (see \cite{Free1}, \cite{Free2}) that the family of operators $\{u^{k}h(u^{*})^k\}_{k=0}^{\infty}$ is free. Therefore, defining by $c_{k}=u^{k}h(u^{*})^{k}$ we see that $T^{*}T\sim_{d}c_{1}^2$, $(T^{*})^{2}T^2\sim_d c_{2}c_{1}^2c_{2}$ and it can be shown by induction that
$$
(T^{*})^{n}T^{n}\sim_{d} c_{n}c_{n-1}\cdots c_{1}^2 \cdots c_{n-1}c_{n}.
$$
Therefore, since $c_{k}$ has the same distribution than $a_{k}$, and both families are free, we conclude that the operators $(T^{*})^{n}T^{n}$ and $b_{n}$ have the same distribution. Moreover, by Theorem 2.2 in \cite{HaS} the sequence $\big[(T^{*})^{n}T^{n}\big]^{\frac{1}{2n}}$ converges in distribution to a positive operator $\Lambda$. Let $\nu$ be the probability measure distribution of $\Lambda$. If the distribution of $a_{k}^{2}$ is a Dirac delta, $\mu=\delta_{\lambda}$, then $h=\sqrt{\lambda}$ and 
$$
\big[(T^{*})^{n}T^{n}\big]^{\frac{1}{2n}}=\big[\lambda^{n}(u^{*})^{n}u^{n}\big]^{\frac{1}{2n}}=\sqrt{\lambda}.
$$ 
\noindent Therefore, $b_{n}^{\frac{1}{2n}}$ has the Dirac delta distribution distribution $\delta_{\sqrt{\lambda}}$ and $\nu=\delta_{\sqrt{\lambda}}$. If the distribution of $a_{k}$ is not a Dirac delta, let $\mu_{T}$ the Brown measure of the operator $T$. By Theorem 2.5 in \cite{HaS} we know that
\begin{equation}\label{eqS}
\int_{\C}{|\lambda|^{p}d\mu_{T}(\lambda)}=\lim_{n}{\norm{T^{n}}_{\frac{p}{n}}^{\frac{p}{n}}}
=\lim_{n}{\tau \Big( [(T^{*})^{n}T^{n}]^{\frac{p}{2n}}\Big)}=\tau(\Lambda^{p})=\int_{0}^{\infty}{t^p\,d\nu(t)}.
\end{equation}
\noindent We know by Theorem \ref{Mteo} and Corollary \ref{Mcor} that 
\begin{equation}
\mu_{T}=\beta\delta_{0}+\rho\quad\text{with}\quad d\rho(r,\theta)=\frac{1}{2\pi}f(r)\,\mathbf{1}_{(F_{\mu}(\beta),F_{\mu}(1)]}(r) \,dr d\theta
\end{equation}
where $f(t)=\big(F^{<-1>}_{\mu}\big)'(t)$ and $F_{\mu}(t)=S_{\mu}(t-1)^{-1/2}$. Hence, using equation (\ref{eqS}) we see that
\begin{equation*}
\int_{0}^{\infty}{r^{p}\,d\nu(r)}=\int_{0}^{2\pi}\int_{F_{\mu}(\beta)}^{F_{\mu}(1)}{\frac{1}{2\pi}r^{p}f(r)\,dr d\theta}=\int_{F_{\mu}(\beta)}^{F_{\mu}(1)}{r^{p}f(r)dr}
\end{equation*}
for all $p\geq 1$. Using the fact that if two compactly supported probability measures in $\mathcal{M}_{+}$ have the same  moments then they are equal, we see that
$$
\nu=\beta\delta_{0}+\sigma\quad\text{with}\quad d\sigma=f(t)\,\mathbf{1}_{(F_{\mu}(\beta),F_{\mu}(1)]}(t)\,dt.
$$
\noindent By Corollary \ref{Mcor}, we know that 
$$
F_{\mu}(1)=\norm{a_{1}}_2 \quad\text{and}\quad \lim_{t\to \beta^{+}}{F_{\mu}(t)}=\norm{a_{1}^{-1}}_{2}^{-1}
$$ 
\noindent concluding the proof.
\end{proof}

\noindent Note that the last Theorem gives us a map $\mathcal{G}:\mathcal{M_{+}}\to \mathcal{M_{+}}$ with $\mu\mapsto \mathcal{G}(\mu)=\nu$. The measure $\mathcal{G}(\mu)$ is a compactly supported positive measure with at most one atom at zero and $\mathcal{G}(\mu)(\{0\})=\mu(\{0\})$.\\ 

\noindent Since
$$
\mathcal{G}(\mu)=\beta\delta_{0}+\sigma\quad\quad\text{with}\quad\quad d\sigma=f(t)\,\mathbf{1}_{(F_{\mu}(\beta),F_{\mu}(1)]}(t)\,dt
$$
\noindent and $f(t)=\big(F^{<-1>}_{\mu}\big)'(t)$ where $F_{\mu}(t)=S_{\mu}(t-1)^{\,-1/2}$ for $t\in(\beta,1]$. The function $S_{\mu}(t-1)$ for $t\in(\beta,1]$ is analytic and completely determined by $\mu$. If $\mu_{1}, \mu_{2}\in\mathcal{M}_{+}$ and $S_{\mu_1}(t-1)=S_{\mu_{2}}(t-1)$ in some open interval $(a,b)\subseteq (0,1]$ implies that $\mu_{1}=\mu_{2}$. Therefore, the map $\mathcal{G}$ is an injection.

\vspace{0.3cm}
\begin{obs}A measure $\mu$ in $\mathcal{M_{+}}$ is said $\boxtimes$-infinitely divisible if for each $n\geq 1$ there exists a measure $\mu_{n}$ in  $\mathcal{M_{+}}$ such that
$$
\mu=\mu_{n}\boxtimes\mu_{n}\ldots\boxtimes\mu_{n}\quad (\text{$n$ times}).
$$
\noindent We would like to observe that the image of the map $\mathcal{G}$ is not contained in the set of $\boxtimes$-infinitely divisible laws since an $\boxtimes$-infinitely divisible law cannot have an atom at zero (see Lemma 6.10 in \cite{BerVoi}).
\end{obs}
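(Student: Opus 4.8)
The plan is to exhibit a single measure in the range of $\mathcal{G}$ that fails to be $\boxtimes$-infinitely divisible; since to show that one set is not contained in another it suffices to produce one such element, this settles the claim. The obstruction will be an atom at the origin, which $\mathcal{G}$ is capable of producing but which a nontrivial $\boxtimes$-infinitely divisible law cannot carry.

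First I would recall the two ingredients. On the one hand, Theorem~\ref{main} together with the discussion following it shows that for every $\mu\in\mathcal{M}_{+}$ the measure $\mathcal{G}(\mu)$ lies in $\mathcal{M}_{+}$, has at most one atom (necessarily located at $0$), and satisfies $\mathcal{G}(\mu)(\{0\})=\mu(\{0\})$. On the other hand, Lemma~6.10 in \cite{BerVoi} states that a $\boxtimes$-infinitely divisible measure on $[0,\infty)$ which is not $\delta_{0}$ assigns no mass to $0$; equivalently, the only $\boxtimes$-infinitely divisible law carrying an atom at the origin is $\delta_{0}$ itself, whose factorization $\delta_{0}=\delta_{0}\boxtimes\cdots\boxtimes\delta_{0}$ is trivial.

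Then I would pick any $\mu\in\mathcal{M}_{+}$ with $0<\mu(\{0\})<1$, for instance $\mu=\tfrac12\delta_{0}+\tfrac12\delta_{1}$, and set $\nu=\mathcal{G}(\mu)$. By the first ingredient, $\nu(\{0\})=\mu(\{0\})\in(0,1)$, so $\nu$ has a genuine atom at $0$ and moreover $\nu\neq\delta_{0}$, since $\nu$ places mass $1-\mu(\{0\})>0$ on $(0,\infty)$. By the second ingredient, such a $\nu$ cannot be $\boxtimes$-infinitely divisible. As $\nu$ lies in the image of $\mathcal{G}$, this image is not contained in the class of $\boxtimes$-infinitely divisible laws, which is the assertion of the remark.

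There is no serious difficulty here: once Theorem~\ref{main} is in hand, the conclusion is immediate from the known structure of $\boxtimes$-infinitely divisible measures. The one point to be mindful of is to keep the test measure $\mathcal{G}(\mu)$ away from $\delta_{0}$, which is itself $\boxtimes$-infinitely divisible and does carry an atom at $0$; imposing $\mu(\{0\})<1$ when choosing $\mu$ takes care of this, so the argument genuinely rules out $\boxtimes$-infinite divisibility for a \emph{nontrivial} element of the image.
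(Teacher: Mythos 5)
Your argument is correct and is essentially the paper's own: the remark rests precisely on the facts that $\mathcal{G}(\mu)(\{0\})=\mu(\{0\})$ (so the image contains measures with an atom at $0$) and that, by Lemma 6.10 of \cite{BerVoi}, a $\boxtimes$-infinitely divisible law cannot carry such an atom. Your explicit witness $\mu=\tfrac12\delta_{0}+\tfrac12\delta_{1}$ and the care taken to exclude the degenerate case $\delta_{0}$ are just a slightly more detailed spelling-out of the same observation.
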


\vspace{0.2cm}
\noindent The next Theorem investigates how the map $\mathcal{G}$ behaves with respect to additive and multiplicative free convolution. 
\vspace{0.3cm}
\begin{teo}
Let $\mu$ be a measure in $\mathcal{M}_{+}$ and $n\geq 1$. If \,$\mathcal{G}(\mu)=\beta\delta_{0}+\sigma$ with $d\sigma=f(t)\,\mathbf{1}_{(F_{\mu}(\beta),F_{\mu}(1)]}(t)\,dt$ then
$$
\mathcal{G}(\mu^{\boxplus\, n})=\beta_{n}\delta_{0}+\sigma_{n}\quad\text{with}\quad d\sigma_{n}=\sqrt{n}f(t/\sqrt{n})\,\mathbf{1}_{(\sqrt{n}F_{\mu}(\frac{\beta_{n}+n-1}{n})\,,
\sqrt{n}F_{\mu}(1)]}(t)\,dt
$$ 
\noindent where $\beta_{n}=\max\{0,n\beta-(n-1)\}$ and 
$$
\mathcal{G}(\mu^{\boxtimes n})=\beta\delta_{0}+\rho_{n}\quad\text{with}\quad d\rho_{n}=\frac{1}{n}t^{\frac{1-n}{n}}f(t^{1/n})\,\mathbf{1}_{(F_{\mu}(\beta)^{n}\,,F_{\mu}(1)^{n}]}(t)\, dt.
$$
\end{teo}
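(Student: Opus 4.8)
The plan is to derive both formulas by feeding the measures $\mu^{\boxplus n}$ and $\mu^{\boxtimes n}$ into Theorem~\ref{main}. Thus everything reduces to two computations: (i) the mass of these measures at $0$, via Theorems~\ref{Berco} and~\ref{SB}; and (ii) their functions $F$ expressed through $F_\mu$, via the $S$-transform. Throughout one assumes $\mu$ is not a point mass (otherwise $\mathcal G(\mu)$ is not of the stated form); then neither $\mu^{\boxplus n}$ nor $\mu^{\boxtimes n}$ is a point mass, so Theorem~\ref{main} describes $\mathcal G$ of each of them in the ``atom at $0$ plus absolutely continuous part'' form.

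The multiplicative case is immediate. By multiplicativity of the $S$-transform under $\boxtimes$ (Theorem~2.1), $S_{\mu^{\boxtimes n}}=(S_\mu)^n$ on the relevant interval, while $\mu^{\boxtimes n}(\{0\})=\beta$ follows from Theorem~\ref{SB}(2) by induction. Hence $F_{\mu^{\boxtimes n}}(t)=S_{\mu^{\boxtimes n}}(t-1)^{-1/2}=\big(S_\mu(t-1)^{-1/2}\big)^n=F_\mu(t)^n$ for $t\in(\beta,1]$, an increasing analytic bijection onto $(F_\mu(\beta)^n,F_\mu(1)^n]$ whose compositional inverse is $t\mapsto F_\mu^{<-1>}(t^{1/n})$. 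Differentiating gives $\big(F_{\mu^{\boxtimes n}}^{<-1>}\big)'(t)=\tfrac1n t^{(1-n)/n}f(t^{1/n})$, and substituting this into the conclusion of Theorem~\ref{main} applied to $\mu^{\boxtimes n}$ yields the stated $\rho_n$.

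The additive case rests on the analytic identity $S_{\mu^{\boxplus n}}(v)=\tfrac1n S_\mu(v/n)$. I would obtain it from the standard relation between the $S$-transform and the free cumulants: writing $C_\mu(z)=\sum_{k\geq1}\kappa_k(\mu)z^k$ one has $v\,S_\mu(v)=C_\mu^{<-1>}(v)$ (equivalently $S_\mu(v)^{-1}=R_\mu(vS_\mu(v))$); since free cumulants add under $\boxplus$, $C_{\mu^{\boxplus n}}=nC_\mu$, hence $C_{\mu^{\boxplus n}}^{<-1>}(v)=C_\mu^{<-1>}(v/n)$, and the identity follows, first near $0$ and then by analytic continuation on the interval $(\beta_n-1,0]$ where $F_{\mu^{\boxplus n}}$ lives (one checks $(\beta_n-1)/n\geq\beta-1$, so $v/n$ stays in the domain of the continued $S_\mu$ of Theorem~\ref{Mteo}(2)). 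This gives $F_{\mu^{\boxplus n}}(t)=\big(\tfrac1n S_\mu(\tfrac{t-1}{n})\big)^{-1/2}=\sqrt n\,F_\mu\big(\tfrac{t+n-1}{n}\big)$, where the argument $\tfrac{t+n-1}{n}$ (which equals $\max\{\beta,1-1/n\}$ at $t=\beta_n$) runs inside $(\beta,1]$; its compositional inverse is the affine reparametrisation $r\mapsto nF_\mu^{<-1>}(r/\sqrt n)-(n-1)$, so $\big(F_{\mu^{\boxplus n}}^{<-1>}\big)'(r)=\sqrt n\,f(r/\sqrt n)$, while the endpoints $F_{\mu^{\boxplus n}}(1)=\sqrt n F_\mu(1)$ and $F_{\mu^{\boxplus n}}(\beta_n)=\sqrt n F_\mu\big(\tfrac{\beta_n+n-1}{n}\big)$ give the support interval. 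For the atom, since $\mu$ and every $\mu^{\boxplus k}$ live on $[0,\infty)$, an atom at $0$ of $\mu^{\boxplus k}\boxplus\mu$ can only come from $0=0+0$, so Theorem~\ref{Berco} and induction give $\mu^{\boxplus n}(\{0\})=\max\{0,n\beta-(n-1)\}=\beta_n$. Plugging all this into Theorem~\ref{main} for $\mu^{\boxplus n}$ gives the stated $\sigma_n$.

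The one step requiring genuine care is the identity $S_{\mu^{\boxplus n}}(v)=\tfrac1n S_\mu(v/n)$: it is transparent as a formal power series, but one must verify that the analytic continuation of $S_\mu$ through $(\beta-1,0]$ invoked in Theorem~\ref{Mteo}(2) is compatible with it, and that the resulting endpoint values match the norms $\norm{b}_2$ and $\norm{b^{-1}}_2^{-1}$ for $b^2\sim\mu^{\boxplus n}$ (the former using that first moments add under $\boxplus$). Everything after that is bookkeeping with changes of variable.
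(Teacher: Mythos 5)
Your proposal is correct and follows essentially the same route as the paper: establish $S_{\mu^{\boxplus n}}(z)=\tfrac1n S_\mu(z/n)$ (your cumulant derivation is just the paper's identity $\big(zR_\mu(z)\big)^{<-1>}=zS_\mu(z)$ combined with $R_{\mu^{\boxplus n}}=nR_\mu$) and $S_{\mu^{\boxtimes n}}=S_\mu^n$, translate these into $F_{\mu^{\boxplus n}}(t)=\sqrt n\,F_\mu\big(\tfrac{t+n-1}{n}\big)$ and $F_{\mu^{\boxtimes n}}=F_\mu^n$, compute the atoms at $0$ via Theorems~\ref{Berco} and~\ref{SB}, and plug everything into Theorem~\ref{main}. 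Your extra remarks on analytic continuation of $S_\mu$ and on the domain of $\tfrac{t+n-1}{n}$ are care the paper omits but do not change the argument.
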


\begin{proof}

\noindent Recall the relation between the $R_{\mu}$ and $S_{\mu}$ transform (see \cite{HaF}), 
$$
\Big(zR_{\mu}(z)\Big)^{<-1>}=zS_{\mu}(z).
$$ 
\noindent By the fundamental property of the $R$--transform we have $R_{\mu^{\boxplus n}}(z)=nR_{\mu}(z)$. Therefore,
$$
\Big(znR_{\mu}(z)\Big)^{<-1>}=zS_{\mu^{\boxplus n}}(z).
$$
\noindent Hence
$$
\frac{z}{n}S_{\mu}(z/n)=zS_{\mu^{\boxplus n}}(z)
$$
\noindent thus
\begin{equation}
S_{\mu^{\boxplus n}}(z)=\frac{1}{n}S_{\mu}(z/n).
\end{equation}
\noindent Then
$$
F_{\mu^{\boxplus n}}(t)=S_{\mu^{\boxplus n}}(t-1)^{-1/2}=\Bigg(\frac{1}{n}S_{\mu}\Big(\frac{t-1}{n}\Big) \Bigg)^{-1/2}=\sqrt{n}F_{\mu}\Big(\frac{t+n-1}{n}\Big)
$$
it is a direct computation to see that
\begin{equation}
 F_{\mu^{\boxplus n}}^{<-1>}(t)=nF_{\mu}^{<-1>}(t/\sqrt{n})-n+1.
\end{equation}
\noindent By iterating Theorem \ref{Berco} we see that $\mu^{\boxplus\,n}(\{0\})=\max\{0,n\beta-(n-1)\}=\beta_{n}$.

\vspace{0.3cm}
\noindent Now using Theorem \ref{main} we obtain 
$$
\mathcal{G}(\mu^{\boxplus n})=\beta_{n}\delta_{0}+\sigma_{n}\quad\text{with}\quad d\sigma_{n}=\sqrt{n}f(t/\sqrt{n})\,\mathbf{1}_{(\sqrt{n}F_{\mu}(\frac{\beta_{n}+n-1}{n})\,,
\sqrt{n}F_{\mu}(1)\,]}(t)\,dt.
$$
Now let us prove the multiplicative free convolution part, let $\mu^{\boxtimes n}$ then 
$$
S_{\mu^{\boxtimes n}}(z)=S_{\mu}^{n}(z).
$$ 
\noindent Then $F_{\mu^{\boxtimes n}}(t)=F_{\mu}^{n}(t)$ and therefore,
\begin{equation}
 F_{\mu^{\boxtimes n}}^{<-1>}(t)=F_{\mu}^{<-1>}(t^{1/n}).
\end{equation}
\noindent By Theorem \ref{SB} we now that $\mu^{\boxtimes\, n}(\{0\})=\mu(\{0\})=\beta$. Therefore, using Theorem \ref{main} again we obtain 
$$
\mathcal{G}(\mu^{\boxtimes n})=\beta\delta_{0}+\rho_{n}\quad\text{with}\quad d\rho_{n}=\frac{1}{n}t^{\frac{1-n}{n}}f(t^{1/n})\,\mathbf{1}_{(F_{\mu}(\beta)^{n},\,F_{\mu}(1)^{n}]}(t)\, dt.
$$
\end{proof}

\section{Examples}

\noindent In this Section we present some examples of the image of the map $\mathcal{G}$.

\vspace{0.3cm}
\begin{ej}$\mathrm{(Projection)}$
Let $p$ be a projection with $\tau(p)=\alpha$. Then the spectral probability measure of $p$ is $\mu_{p}=(1-\alpha)\delta_{0}+\alpha\delta_{1}$. We would like to compute $\mathcal{G}(\mu_{p})$. Recall that 
$$
S_{p}(z)=\frac{z+1}{z+\alpha}.
$$ 
Therefore,
$$
F_{\mu}(t)=\Big(\frac{t-1+\alpha}{t}\Big)^{1/2} \quad\text{and}\quad F_{\mu}^{<-1>}(t)=\frac{1-\alpha}{1-t^{2}}.
$$
\noindent Hence,
$$
\mathcal{G}(\mu_{p})=(1-\alpha)\delta_{0}+\sigma\quad\text{with}\quad d\sigma=\frac{2t(1-\alpha)}{(t^{2}-1)^{2}}\,\mathbf{1}_{(0,\sqrt{\alpha}]}(t)\,dt.
$$
\end{ej}

\begin{ej}
Let $h$ be a quarter--circular distributed positive operator,
$$
d\mu_{h}=\frac{1}{\pi}\sqrt{4-t^2}\,\mathbf{1}_{[0,2]}(t)\,dt.
$$
\noindent A simple computation shows that 
$$
S_{h^{2}}(z)=\frac{1}{z+1}
$$
\noindent hence by Theorem \ref{main} we see that
$$
d\mathcal{G}(\mu_{h^2})=2t\,\mathbf{1}_{[0,1]}(t)\,dt.
$$ 
\end{ej}

\vspace{0.3cm}
\begin{ej}\label{ejmp}$\mathrm{(Marchenko-Pastur\,\, distribution)}$\\ 

\noindent Let $c>0$ and let $\mu_{c}$ be the Marchenko Pastur or Free Poisson distribution given by
$$
d\mu_{c}=\max\{1-c,0\}\delta_{0}+\frac{\sqrt{(t-a)(b-t)}}{2\pi t}\,\mathbf{1}_{(a,b)}(t)\,dt
$$
where $a=\big(\sqrt{c}-1\big)^2$ and $b=\big(\sqrt{c}+1\big)^2$.\\

\noindent It can be shown (see for example \cite{HaF}) that
$$
S_{\mu_{c}}(z)=\frac{1}{z+c}.
$$
Therefore,
$$
F_{\mu_{c}}(t)=\sqrt{t-1+c}\quad\text{and}\quad F_{\mu_{c}}^{<-1>}(t)=t^{2}+1-c.
$$
Hence,
$$
\mathcal{G}(\mu_{c})=\max\{1-c,0\}\delta_{0}+\sigma\quad\text{with}\quad d\sigma=2t\,\mathbf{1}_{(\sqrt{\max\{c-1\,,0\}},\sqrt{c}\,]}(t)\,dt.
$$
\end{ej}

\vspace{0.3cm}
\section{Lyapunov exponents of free operators}

\vspace{0.2cm}
\noindent Let $\{a_{k}\}_{k=1}^{\infty}$ be free positive identically distributed operators. Let $\mu$ be the spectral probability measure of $a_{k}^{2}$ and assume that $\mu(\{0\})=0$. Using Theorem \ref{main} we know that 
the sequence of positive operators 
$$
\Big (a_{1}a_{2}\ldots a_{n}^2\ldots a_{2}a_{1}\Big)^{\frac{1}{2n}}
$$ 
converges in distribution to a positive operator $\Lambda$ with distribution $\nu$ in  $\mathcal{M}_{+}$. Since $\mu(\{0\})=0$, this distribution is absolutely continuous with respect to the Lebesgue measure and has Radon--Nikodym derivative
$$
d\nu(t)=f(t)\,\mathbf{1}_{(\norm{a_{1}^{-1}}_{2}^{-1},\norm{a_{1}}_{2}]}(t)\,dt
$$
where $f(t)=\big(F^{<-1>}_{\mu}\big)'(t)$ and $F_{\mu}(t)=S_{\mu}(t-1)^{-1/2}$.

\vspace{0.2cm}
\noindent Let $L$ be the selfadjoint, possibly unbounded operator, defined by $L:=\ln(\Lambda)$, and let $\gamma$ be the spectral probability distribution of $L$. It is a direct calculation to see that $\gamma$ is absolutely continuous with respect to Lebesgue measure and has Radon--Nikodym derivative 
$$
d\gamma(t)=e^{t}f(e^{t})\,\mathbf{1}_{(\ln\norm{a_{1}^{-1}}_2^{-1},\,\ln\norm{a_{1}}_2]}(t)\,dt.
$$

\vspace{0.2cm}
\noindent The probability distribution $\gamma$ of $L$ is what is called the distribution of the Lyapunov exponents (see \cite{N1}, \cite{N2} and \cite{Ru} and \cite{LyaV} for a more detailed exposition on Lyapunov exponents in the classical and non--classical case).

\vspace{0.3cm}
\begin{teo}
Let $\{a_{k}\}_{k=1}^{\infty}$ be free positive identically distributed invertible operators. Let $\mu$ be the spectral probability measure of $a_{k}^{2}$. Let $\gamma$ be probability distribution of the Lyapunov exponents associated to the sequence. Then 
$\gamma$ is absolutely continuous with respect to Lebesgue measure and has Radon--Nikodym derivative 
$$
d\gamma(t)=e^{t}f(e^{t})\,\mathbf{1}_{(\ln\norm{a_{1}^{-1}}_2^{-1},\,\ln\norm{a_{1}}_2]}(t)\,dt.
$$
where $f(t)=\big(F^{<-1>}_{\mu}\big)'(t)$ and $F_{\mu}(t)=S_{\mu}(t-1)^{-1/2}$.
\end{teo}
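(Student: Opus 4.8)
The plan is to derive the statement directly from Theorem \ref{main} by pushing the limiting distribution $\nu$ forward under the logarithm; the only content beyond Theorem \ref{main} is a one-variable change of variables, so I would keep the argument short.

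First I would note that, since each $a_k$ is invertible, $a_1^{-1}$ is a bounded operator, hence $\norm{a_1^{-1}}_2<\infty$ and $\norm{a_1^{-1}}_2^{-1}>0$; moreover the spectral distribution $\mu$ of $a_1^2$ satisfies $\mu(\{0\})=0$. Thus Theorem \ref{main} applies and yields that $\big(a_1a_2\cdots a_n^2\cdots a_2a_1\big)^{1/2n}$ converges in distribution to a positive operator $\Lambda$ with
$$
d\nu(t)=f(t)\,\mathbf{1}_{(\norm{a_1^{-1}}_2^{-1},\,\norm{a_1}_2]}(t)\,dt,
$$
where $f(t)=\big(F^{<-1>}_{\mu}\big)'(t)$, $F_{\mu}(t)=S_{\mu}(t-1)^{-1/2}$, and where the atom $\beta\delta_0$ vanishes because $\beta=\mu(\{0\})=0$. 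In particular $\mathrm{supp}(\nu)\subseteq[\norm{a_1^{-1}}_2^{-1},\norm{a_1}_2]$, a compact subinterval of $(0,\infty)$, so $\Lambda$ is bounded and bounded below, and $L:=\ln\Lambda$ is a bounded self-adjoint operator defined via continuous functional calculus.

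Next I would identify $\gamma$, the distribution of the Lyapunov exponents, with the spectral distribution of $L$ — this is the notion from \cite{LyaV} recalled just before the statement. Because $\ln\colon(0,\infty)\to\R$ is a homeomorphism and, by functional calculus, the spectral measure of $L=\ln\Lambda$ is the image of the spectral measure of $\Lambda$ under $\ln$, we have $\gamma=\ln_{*}\nu$, i.e. $\int\phi\,d\gamma=\int\phi(\ln t)\,d\nu(t)$ for every bounded Borel function $\phi$. Substituting the density of $\nu$ and then making the substitution $t=e^{s}$, $dt=e^{s}\,ds$, I obtain
$$
\int\phi\,d\gamma=\int_{\norm{a_1^{-1}}_2^{-1}}^{\norm{a_1}_2}\phi(\ln t)\,f(t)\,dt=\int_{\ln\norm{a_1^{-1}}_2^{-1}}^{\ln\norm{a_1}_2}\phi(s)\,e^{s}f(e^{s})\,ds,
$$
which is precisely the claimed formula: $\gamma$ is absolutely continuous with $d\gamma(t)=e^{t}f(e^{t})\,\mathbf{1}_{(\ln\norm{a_1^{-1}}_2^{-1},\,\ln\norm{a_1}_2]}(t)\,dt$.

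There is no genuine analytic obstacle: by Corollary \ref{Mcor}, $F_{\mu}$ is analytic with $F_{\mu}'>0$, so $F^{<-1>}_{\mu}$ and hence $f$ are well defined, positive and continuous on the relevant interval, and $\ln$ is a $C^1$ diffeomorphism onto its image, so the change of variables is legitimate and the endpoints transform as stated. The one point I would make explicit is the identification $\gamma=\ln_{*}\nu$, namely that taking the logarithm of the limit operator $\Lambda$ commutes with passing to spectral distributions; this is immediate from functional calculus, and it is exactly here that invertibility of the $a_k$ is used, to guarantee — via the strict positivity of the left endpoint $\norm{a_1^{-1}}_2^{-1}$ in Theorem \ref{main} — that $\Lambda$ is invertible so that $L$ is genuinely bounded and $\gamma$ compactly supported.
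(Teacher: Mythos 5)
Your proposal is correct and follows essentially the same route as the paper: invoke Theorem \ref{main} (the atom at $0$ vanishing since invertibility gives $\mu(\{0\})=0$), identify $\gamma$ as the pushforward of $\nu$ under $\ln$ via functional calculus, and perform the change of variables $t=e^{s}$, which is exactly the ``direct calculation'' the paper alludes to. Your explicit remarks on why $\Lambda$ is invertible and $L$ bounded are a welcome elaboration of the paper's brief treatment, but they do not change the argument.
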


\vspace{0.3cm}
\begin{obs}
Note that if the operators $a_{k}$ are not invertibles in the $\norm{\cdot}_{2}$ then the selfadjoint operator $L$ is unbounded. See in the next example the case $\lambda=1$.
\end{obs}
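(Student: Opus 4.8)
\noindent The plan is to reduce the statement to a fact about the left endpoint of the support of $\nu$, which is already controlled by Theorem \ref{main}. Recall that $\Lambda$ is a positive operator whose spectral distribution $\nu$ satisfies $\mathrm{supp}(\nu)\subseteq[0,\norm{a_1}_2]$. Consequently the self-adjoint operator $L=\ln(\Lambda)$ has spectral distribution $\gamma$ equal to the push-forward of $\nu$ under $t\mapsto\ln t$, so $\mathrm{supp}(\gamma)\subseteq(-\infty,\ln\norm{a_1}_2]$ is bounded above. Hence $L$ is bounded if and only if $\mathrm{supp}(\gamma)$ is bounded below, and since $\ln$ is a homeomorphism of $(0,\infty)$ onto $\R$, this happens if and only if $\inf(\mathrm{supp}(\nu)\cap(0,\infty))>0$. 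Thus it suffices to show that this infimum equals $0$ precisely when $a_1$ fails to be invertible in $\norm{\cdot}_2$.

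To compute the infimum I would invoke Theorem \ref{main}: $\nu=\beta\delta_0+\sigma$ with $d\sigma=f(t)\,\mathbf 1_{(F_\mu(\beta),F_\mu(1)]}(t)\,dt$, where $\beta=\mu(\{0\})$ and $f=(F_\mu^{<-1>})'$. By Corollary \ref{Mcor}, $F_\mu$ is analytic and strictly increasing on $(\beta,1)$, hence $F_\mu^{<-1>}$ is strictly increasing and $f>0$ throughout the interior $(F_\mu(\beta),F_\mu(1))$; therefore $\sigma$ charges every interval $(F_\mu(\beta),F_\mu(\beta)+\varepsilon)$, so
$$
\inf(\mathrm{supp}(\nu)\cap(0,\infty))=F_\mu(\beta^{+})=\lim_{t\to\beta^{+}}S_\mu(t-1)^{-1/2}=\norm{a_1^{-1}}_2^{-1},
$$
the last equality being the endpoint identity of Corollary \ref{Mcor}, with the convention $\norm{a_1^{-1}}_2^{-1}=0$ when $\norm{a_1^{-1}}_2=\infty$. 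By definition $a_1$ is not invertible in $\norm{\cdot}_2$ exactly when $\norm{a_1^{-1}}_2^2=\tau(a_1^{-2})=\int_0^\infty t^{-1}\,d\mu(t)=\infty$, i.e. exactly when $\norm{a_1^{-1}}_2^{-1}=0$; combining the two displays then gives $\inf(\mathrm{supp}(\nu)\cap(0,\infty))=0$ in that case, and hence $L$ is unbounded (in fact unbounded below). The case $\lambda=1$ of the following example provides a concrete instance.

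\noindent I expect the only point requiring care to be the meaning of $L=\ln(\Lambda)$ when $\Lambda$ has a nontrivial kernel, i.e. when $\beta=\mu(\{0\})>0$ (a situation excluded by the standing assumption of this section but permitted by the wording of the remark). In that case $\ln(\Lambda)$ should be interpreted via the Borel functional calculus on the spectral subspace $(\ker\Lambda)^{\perp}=\overline{\mathrm{ran}}\,\Lambda$, and one must check that the reduction of the first paragraph still applies there. This is harmless: the restriction of $\Lambda$ to $(\ker\Lambda)^{\perp}$ has spectral distribution proportional to $\sigma$, whose support still has infimum $F_\mu(\beta^{+})=0$ by the computation above (note $\beta>0$ forces $\int t^{-1}\,d\mu=\infty$, so indeed $\norm{a_1^{-1}}_2^{-1}=0$), so $L$ is again unbounded below. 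Apart from this bookkeeping the argument is entirely a matter of transporting the description of $\nu$ through the logarithm, so no genuinely new estimate is needed.
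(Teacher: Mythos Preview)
Your argument is correct and is essentially the reasoning the paper leaves implicit. The paper does not give a separate proof of this remark; it is meant to be read off directly from the formula $d\gamma(t)=e^{t}f(e^{t})\,\mathbf{1}_{(\ln\norm{a_1^{-1}}_2^{-1},\,\ln\norm{a_1}_2]}(t)\,dt$ displayed just above: when $\norm{a_1^{-1}}_2=\infty$ the left endpoint of the support is $\ln 0=-\infty$, so $L$ is unbounded below. Your proposal simply unpacks this, tracing the endpoint back through Theorem~\ref{main} and Corollary~\ref{Mcor} to the identity $F_\mu(\beta^{+})=\norm{a_1^{-1}}_2^{-1}$, and adds the (harmless, and in this section unnecessary since $\beta=0$ is assumed) bookkeeping about $\ker\Lambda$.
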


\vspace{0.3cm}
\noindent The following is an example done previously in \cite{LyaV} using different techniques. 
\vspace{0.3cm}
\begin{ej}$\mathrm{(Marchenko-Pastur\,\, distribution)}$
Let $\{a_{k}\}_{k=1}^{\infty}$ be free positive identically distributed operators such that $a_{k}^{2}$ has the Marchenko--Pastur distribution $\mu$ of parameter $\lambda\geq 1$. Then as we saw in the Example \ref{ejmp}, in the last Section
$$
d\nu(t)=2t\,\mathbf{1}_{(\sqrt{\lambda-1},\sqrt{\lambda}\,]}(t)\,dt.
$$
Therefore, we see that the probability measure of the Lyapunov exponents is $\gamma$ with
$$
d\gamma(t)=2e^{2t}\,\mathbf{1}_{\big(\frac{1}{2}\ln(\lambda-1),\frac{1}{2}\ln(\lambda)\big]}(t)\,dt.
$$
If $\lambda=1$, this law is the exponential law discovered by C.M.Newman as a scaling limit of Lyapunov exponents of large random matrices. (See \cite{N1}, \cite{N2} and \cite{IsN}). This law is often called the ``triangle'' law since it implies that the exponentials of Lyapunov exponents converge to the law whose density is in the form of a triangle.
\end{ej}

\end{document}